\newtheorem{thm}{Theorem}[section]
\newtheorem{theorem}[thm]{Theorem}
\newtheorem{corollary}[thm]{Corollary}
\newtheorem{proposition}[thm]{Proposition}
\newtheorem{lemma}[thm]{Lemma}
\newtheorem*{theorem*}{Theorem}
\theoremstyle{definition}
\newtheorem{example}[thm]{Example}
\newtheorem{definition}[thm]{Definition}
\newtheorem{remark}[thm]{Remark}
\DeclareMathOperator{\Span}{span}
\DeclareMathOperator{\Int}{Int}
\newcommand{\N}{\mathbb{N}} %% Naturals
\newcommand{\Z}{\mathbb{Z}} %% Integers
\newcommand{\C}{\mathbb{C}} %% Complex
\newcommand{\LRA}{\longrightarrow}  %% Arrows
\title[Topological full groups of groupoids]{$\mathrm C^*$-simplicity and representations of topological full groups of groupoids}
\author{Kevin Aguyar Brix}
\address[K.A. Brix]{Department of Mathematical Sciences, University of Copenhagen, Universitetsparken 5, DK-2100 Copenhagen, Denmark}
\email{kab@math.ku.dk}
\author{Eduardo Scarparo}
\address[E. Scarparo]{Departamento de Matemática, Universidade Federal de Santa Catarina, 88040-970 Florianópolis-SC, Brazil}
\email{eduardo.scarparo@posgrad.ufsc.br}
\thanks{The first named author is supported by the Danish National Research Foundation through the Centre for Symmetry and Deformation (DNRF92). The second named author is supported by CNPq, Brazil, 167983/2017-2.}
\begin{document}

\begin{abstract}
    Given an ample groupoid $G$ with compact unit space, 
    we study the canonical representation of the topological full group $[[G]]$ in the full groupoid $\mathrm C^*$-algebra $\mathrm C^*(G)$. 
    In particular, we show that the image of this representation generates $\mathrm C^*(G)$ if and only if $\mathrm C^*(G)$ admits no tracial state.
    The techniques that we use include the notion of groups covering groupoids.
    
    As an application, we provide sufficient conditions for $\mathrm C^*$-simplicity of certain topological full groups, 
    including those associated with topologically free and minimal actions of non-amenable and countable groups on the Cantor set.
\end{abstract}
\maketitle

\section{Introduction}

Topological full groups associated to group actions on the Cantor set have given rise to examples of groups with interesting new properties. 
See, e.g., \cite{MR3071509} and ~\cite{nekrashevych2016palindromic} for recent developments.
In the context of groupoids, the topological full group was introduced by H.~Matui in~\cite{MR2876963}, 
who investigated their relation with homology groups of groupoids.

Following a slightly different approach, V.~Nekrashevych (\cite{MR3904185}) defined the topological full group $[[G]]$ 
of an ample groupoid $G$ with compact unit space to consist of the clopen bisections $U\subset G$ such that $r(U)=s(U)=G^{(0)}$.
In this paper, we study the unitary representation $\pi\colon [[G]]\LRA \mathrm C^*(G)$ given by $\pi(U) := 1_U$, for every $U\in[[G]]$.
Let $\mathrm C^*_\pi([[G]])$ denote the $\mathrm C^*$-algebra generated by $\pi([[G]])$ in $\mathrm C^*(G)$.

Our main result is as follows:

\begin{theorem*}[Theorems~\ref{imp} and~\ref{main}]
Let $G$ be an ample groupoid with compact unit space such that the orbit of each $x\in G^{(0)}$ has at least three points. 
Then $\overline{\Span}\{1-1_U\in \mathrm C^*(G) \mid U\in[[G]]\}$ is a hereditary $\mathrm C^*$-subalgebra of $\mathrm C^*(G)$. 
Moreover, $\mathrm C^*(G)$ admits no tracial state if and only if $\mathrm C^*_\pi([[G]])= \mathrm C^*(G)$.
\end{theorem*}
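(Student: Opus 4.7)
Set $J := \overline{\Span}\{1 - 1_U \mid U \in [[G]]\}$. The first step is to recognize $J$ as a closed $*$-subalgebra of $\mathrm C^*(G)$: self-adjointness is immediate from $(1 - 1_U)^* = 1 - 1_{U^{-1}}$, and the identity
\[
(1 - 1_U)(1 - 1_V) = (1 - 1_U) + (1 - 1_V) - (1 - 1_{UV})
\]
shows $J$ is closed under multiplication, since $[[G]]$ is closed under inversion and products. For hereditariness I would verify $J \cdot \mathrm C^*(G) \cdot J \subseteq J$; since $\mathrm C^*(G)$ is densely spanned by characteristic functions $1_W$ of compact open bisections, the task reduces to $(1 - 1_U) 1_W (1 - 1_V) \in J$. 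The crucial lemma, where the orbit-$\geq 3$ hypothesis enters, is that every compact open bisection $W$ admits a finite clopen partition $W = \bigsqcup_i W_i$ with each $W_i = T_i|_{K_i}$ for some $T_i \in [[G]]$ and clopen $K_i \subseteq G^{(0)}$: having at least three points in each orbit gives enough local room to extend a bisection to an element of $[[G]]$ via an auxiliary $3$-cycle construction. Together with the elementary observations $1_T \cdot J \subseteq J$ and $J \cdot 1_T \subseteq J$ for $T \in [[G]]$ (immediate from the identity above), this reduces the problem to $J \cdot 1_K \cdot J \subseteq J$ for clopen $K \subseteq G^{(0)}$; a further application of the orbit hypothesis, producing $T \in [[G]]$ with prescribed fixed-point set, closes the argument. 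This bisection-partition lemma is the main obstacle I anticipate.

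For Part 2, first observe $\mathrm C^*_\pi([[G]]) = \C \cdot 1 + J$ (since $J$ is a subalgebra and $1_U = 1 - (1 - 1_U)$), so the target equivalence becomes $\C + J = \mathrm C^*(G)$ if and only if there is no tracial state. Under the orbit hypothesis I would argue $\C + J = \mathrm C^*(G)$ actually forces $J = \mathrm C^*(G)$: if $1 \notin J$, then $\mathrm C^*(G) = \C \oplus J$ gives a character $\chi$ via projection along $J$, whose restriction to $C(G^{(0)})$ is evaluation at some $x_0 \in G^{(0)}$. The constraints $\chi(1_U f 1_U^{-1}) = \chi(f)$ and $\chi(1_U) = 1$ for every $U \in [[G]]$ would force $\phi_U(x_0) = x_0$ for all such $U$, contradicting orbit $\geq 3$ via an explicit $3$-cycle element of $[[G]]$ that moves $x_0$.

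It remains to prove $J = \mathrm C^*(G)$ if and only if there is no tracial state. Since $J$ is hereditary (Part 1), the former is equivalent to the nonexistence of a state $\omega$ on $\mathrm C^*(G)$ with $\omega|_J = 0$. Any such $\omega$ has $\omega(1_U) = 1$, so the unitary $1_U$ lies in the multiplicative domain of $\omega$ and $\omega$ is $\mathrm{Ad}(1_U)$-invariant; restricting to $C(G^{(0)})$ gives a $[[G]]$-invariant probability measure $\mu$. Conversely, any $[[G]]$-invariant $\mu$ yields such an $\omega$ as the vector state from the constant function $1 \in L^2(G^{(0)}, \mu)$ in the representation of $\mathrm C^*(G)$ on $L^2(G^{(0)}, \mu)$, well defined because the orbit hypothesis equates $[[G]]$-invariance of $\mu$ with $G$-invariance via the bisection-extension lemma. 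Coupling with the standard bijection between $G$-invariant probability measures on $G^{(0)}$ and tracial states on $\mathrm C^*(G)$ gives the chain: no tracial state $\iff$ no $G$-invariant measure $\iff$ no state vanishing on $J$ $\iff J = \mathrm C^*(G) \iff \mathrm C^*_\pi([[G]]) = \mathrm C^*(G)$.
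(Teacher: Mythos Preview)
Your overall architecture is right, and your Part~2 is essentially the paper's argument (the character/invariant-measure/tracial-state equivalences, the identity $\mathrm C^*_\pi([[G]]) = \C\cdot 1 + J$, and the contradiction via an element of $[[G]]$ moving $x_0$ all appear in Proposition~\ref{putnam}, Proposition~\ref{bonitinho}, Corollary~\ref{nice} and Theorem~\ref{main}). The problem is Part~1: you have misplaced the orbit-$\geq 3$ hypothesis and left the genuine crux unproved.

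The bisection-partition lemma you single out as ``the main obstacle'' only needs $|G(x)|\geq 2$, not $3$. If $g\in W$ has $r(g)\neq s(g)$, shrink $W$ until $r(W)\cap s(W)=\emptyset$ and take $T:=W\cup W^{-1}\cup\bigl(G^{(0)}\setminus(r(W)\cup s(W))\bigr)$, a $2$-cycle; if $r(g)=s(g)$, factor through a second orbit point. This is Lemma~\ref{cuida}. Your reduction to $J\cdot 1_K\cdot J\subset J$ for clopen $K\subset G^{(0)}$ is then correct, and agrees with the paper's route.

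The real difficulty is the step you compress to ``producing $T\in[[G]]$ with prescribed fixed-point set, closes the argument.'' I do not see any $T$ with a prescribed fixed-point set that lets you absorb the middle $1_K$ in $(1-1_U)1_K(1-1_V)$ into $J$; this sentence does not describe a mechanism. The paper does it with two concrete lemmas. Lemma~\ref{tec}: if $S,T\in[[G]]$ and $W$ is clopen with $\theta_S(W),\,W,\,\theta_T^{-1}(W)$ pairwise disjoint, then an explicit calculation writes $(1-1_S)1_W(1-1_T)$ as the difference of two terms $1-1_{U_1}$ and $1-1_{U_2}$, where $U_1,U_2\in[[G]]$ are assembled from the three disjoint pieces (this is where ``three disjoint sets'' is essential and cannot be reduced to two). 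Lemma~\ref{duv}: for fixed $x$ and $y\in G(x)\setminus\{x\}$ one has $\Span\{1-1_U\}=\Span\{1_L(1-1_S):\theta_S(x)=y\}$, and symmetrically on the right. Now the $\geq 3$ hypothesis enters: given $x\in K$, choose distinct $y,z\in G(x)\setminus\{x\}$, rewrite $1-1_U$ via Lemma~\ref{duv} so each inner factor sends $x\mapsto y$, and rewrite $1-1_V$ so each inner factor sends $x\mapsto z$ on the other side; for a sufficiently small clopen neighborhood $W\ni x$ the three images are disjoint and Lemma~\ref{tec} applies. Compactness of $K$ finishes. This computational core, not the covering lemma, is where $\geq 3$ is used and where your proposal has a gap.
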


This generalizes part of~\cite[Proposition 5.3]{MR3630641} (see Remark~\ref{ho}). 
If, in addition, $G$ is second countable, essentially principal and minimal, 
then $\mathrm C^*_r(G)$ is stably isomorphic to $\overline{\Span}\{1-1_U\in \mathrm C_r^*(G)\mid U\in[[G]]\}$ (Corollary~\ref{cor:imp-reduced}).

Given an ample groupoid $G$ with compact unit space, let $\pi_r$ denote the canonical representation of $[[G]]$ in $\mathrm C^*_r(G)$.

Recall that a group is said to be $\mathrm C^*$-simple if its reduced $\mathrm C^*$-algebra is simple. 
Recently, there has been a lot of progress in understanding this notion, and new characterizations of $\mathrm C^* $-simplicity have been obtained (see \cite{MR3735864}, \cite{MR3652252}, \cite{2015arXiv150901870K}). In~\cite{boudec2016subgroup}, 
A.~Le Boudec and N.~Matte Bon showed that a countable group of homeomorphisms on a Hausdorff space $X$ is $\mathrm C^*$-simple if the rigid stabilizers of non-empty and open subsets of $X$ are non-amenable. 
By using this result, we show the following:

\begin{theorem*}[Theorem~\ref{thm:nonamenable-C-simple}]
Let $G$ be a second countable, essentially principal, minimal and ample groupoid with compact unit space. 
If 
\begin{enumerate}
\item[(i)]$G$ is not amenable, or
\item[(ii)]$\pi_r$ does not weakly contain the trivial representation,
\end{enumerate}
 then $[[G]]$ is $\mathrm C^*$-simple.
\end{theorem*}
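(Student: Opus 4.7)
The strategy is to apply the criterion of Le Boudec and Matte Bon from \cite{boudec2016subgroup}: it suffices to show that for every non-empty open $U \subset G^{(0)}$, the rigid stabilizer of $U$ in $[[G]]$ is non-amenable. By ampleness we may take $U$ to be clopen, in which case the rigid stabilizer contains a canonical copy of $[[G|_U]]$ (each $V \in [[G|_U]]$ extended by the identity on $G^{(0)} \setminus U$), so the proof reduces to showing that $[[G|_U]]$ is non-amenable under either hypothesis.

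The restricted groupoid $G|_U$ is still second countable, essentially principal, minimal and ample with compact unit space $U$; by minimality of $G$ every $G|_U$-orbit equals $U$, which is infinite. Thus Theorem~\ref{imp} and Corollary~\ref{cor:imp-reduced} apply to $G|_U$, and writing $\pi_r^U \colon [[G|_U]] \to \mathrm C^*_r(G|_U)$ for the canonical representation we obtain that $A_U := \overline{\Span}\{1_U - 1_V \mid V \in [[G|_U]]\}$ is a hereditary subalgebra of $\mathrm C^*_{\pi_r^U}([[G|_U]])$ stably isomorphic to the simple $\mathrm C^*$-algebra $\mathrm C^*_r(G|_U)$.

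For case (i), minimality of $G$ ensures that $U$ meets every $G$-orbit, so $G|_U$ is Morita equivalent to $G$ and therefore non-amenable. By Anantharaman--Delaroche, $\mathrm C^*_r(G|_U)$ is non-nuclear; since nuclearity is stably invariant and descends to hereditary subalgebras, $\mathrm C^*_{\pi_r^U}([[G|_U]])$ is non-nuclear. If $[[G|_U]]$ were amenable, every one of its unitary representations would factor through the nuclear algebra $\mathrm C^*([[G|_U]])$, yielding a nuclear image---contradiction. Hence $[[G|_U]]$ is non-amenable.

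For case (ii), suppose for contradiction that $[[G|_U]]$ is amenable. Then $\pi_r^U$ weakly contains the trivial representation, so there are unit vectors $\eta_n \in L^2(G|_U) \subset L^2(G)$ with $\|1_V \eta_n - \eta_n\| \to 0$ for every $V \in [[G|_U]]$. The plan is to promote these to almost invariant vectors for $\pi_r$. By minimality and ampleness of $G$, choose a clopen partition $G^{(0)} = U_1 \sqcup \cdots \sqcup U_k$ together with bisections $g_i \in [[G]]$ and a clopen partition $U = U_1' \sqcup \cdots \sqcup U_k'$ such that $g_i(U_i') = U_i$; then $\xi_n := \sum_{i=1}^k \pi_r(g_i)(1_{U_i'} \eta_n) \in L^2(G)$ is a unit vector. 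For a prescribed finite $F \subset [[G]]$, refine the partition so that every $W \in F$ permutes the resulting pieces; a direct calculation then yields $\|\pi_r(W)\xi_n - \xi_n\| \to 0$ for all $W \in F$, so $\pi_r$ weakly contains the trivial representation, contradicting hypothesis (ii). The main obstacle is this combinatorial refinement---producing a partition into $[[G]]$-translates of pieces of $U$ that is simultaneously compatible with a preassigned finite family of bisections---and this is where minimality and the ample structure are essential.
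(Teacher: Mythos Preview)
Your treatment of case (i) is correct and close in spirit to the paper's, though the mechanism differs: the paper deduces amenability of $G|_U$ directly from amenability of $[[G|_U]]$ via Lemma~\ref{amen} (an amenable covering subgroup forces the groupoid to be amenable), whereas you route the argument through nuclearity and the hereditary subalgebra of Corollary~\ref{cor:imp-reduced}. Both are valid; yours leans more heavily on Section~\ref{mains}, the paper's on Section~\ref{cover}.

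Case (ii), however, has a genuine gap. The refinement you ask for---a finite clopen partition of $G^{(0)}$ that every $W\in F$ \emph{permutes}---need not exist, and minimality and ampleness do not help: a minimal homeomorphism of the Cantor set (e.g.\ the dyadic odometer, which lies in $[[G]]$ for many minimal $G$) permutes no non-trivial finite clopen partition. So the ``main obstacle'' you identify is in fact insurmountable as stated. The idea is salvageable: one can drop the permutation requirement and instead, for each $W\in F$, assemble the partial bisections $g_j^{-1}Wg_i$ (restricted to the appropriate pieces of $U$) into a single element $V_W\in[[G|_U]]$; a direct computation then gives $\|\pi_r(W)\xi_n-\xi_n\|=\|\pi_r^U(V_W)\eta_n-\eta_n\|$, and amenability of $[[G|_U]]$ lets you choose $\eta_n$ almost invariant under the finitely many $V_W$. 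You should also specify the Hilbert space: taking $\eta_n\in\ell^2((G|_U)_x)$ for a fixed $x\in U$ and $\xi_n\in\ell^2(G_x)$ makes everything precise.

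The paper takes a quite different route for (ii). From amenability of $[[G_W]]$ it extracts a $G_W$-invariant probability measure on $W$ (via Corollary~\ref{nice}), hence a faithful tracial state on the simple algebra $\mathrm C^*_r(G_W)$, hence stable finiteness of $\mathrm C^*_r(G)$ by stable isomorphism. It then invokes the dichotomy of Rainone--Sims \cite{rainone2017dichotomy} (or B\"onicke--Li \cite{bonicke2017ideal}) to obtain a tracial state on $\mathrm C^*_r(G)$, and concludes via Corollary~\ref{nice} again. This avoids any Hilbert-space construction but imports a substantial external result; your approach, once repaired, would be more self-contained.
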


Consequently, the topological full group associated with a topologically free and minimal action of a countable and non-amenable group on the Cantor set is $\mathrm C^*$-simple (Corollary~\ref{cor:action-C-simple}). 
For free actions, this was shown to be true in~\cite{boudec2016subgroup}.

The paper is organized as follows.
In Section~\ref{pre}, we collect basic definitions about groupoids, establish notation and present some relevant examples.

In Section~\ref{cover}, we study groups covering groupoids. 
Given an ample groupoid $G$ with compact unit space, a subgroup $\Gamma\leq [[G]]$ is said to cover $G$ if $G = \bigcup_{U\in\Gamma} U$. 
We investigate under which conditions $[[G]]$ covers $G$ and show that $\mathrm C^*_\pi([[G]])$ admits a character 
if and only if $G^{(0)}$ admits a $G$-invariant probability measure (Corollary~\ref{nice}).

In Section~\ref{mains}, we analyze the representation of the topological full group in the full and the reduced groupoid $\mathrm C^*$-algebras to reach the main theorem above.

In Section~\ref{simp}, we apply the results of Sections~\ref{cover} and~\ref{mains} in order to study $\mathrm C^*$-simplicity of the topological full group.

%%%%%%%%%%%%%%%%%%%%%%%%%%%%%%%%%%%%%%%%%%%%%%%%%%%%%%%%%%%%%%%%%%%%%%%%%%%%%%%%%%%%%%%%%%%%%%%%%%%%%%%%%%
%%%%%%%%%%%%%%%%%%%%%%%%%%%%%%%%%%%%%%%%%%%%%%%%%%%%%%%%%%%%%%%%%%%%%%%%%%%%%%%%%%%%%%%%%%%%%%%%%%%%%%%%%%
\section{Preliminaries}\label{pre}
In this section we introduce relevant concepts and establish notation.
Throughout the paper, we let $\N = \{0,1,2,3\ldots\}$ denote the non-negative integers.

\subsection{Ample groupoids}
A topological groupoid $G$ is \emph{ample} if $G$ is locally compact, Hausdorff, \'etale 
(in the sense that the range and source maps $r,s\colon G\LRA G$ are local homeomorphisms onto $G^{(0)}$) 
and the unit space $G^{(0)}$ is totally disconnected. 
The \emph{orbit} of a point $x\in G^{(0)}$ is the set $G(x) := r(s^{-1}(x))$, and $G$ is said to be \emph{minimal} if $\overline{G(x)}=G^{(0)}$, for every $x\in G^{(0)}$.

A \emph{bisection} is a subset $S\subset G$ such that $r|_S$ and $s|_S$ are injective. Note that, if $S$ is open, then $r|_S$ and $s|_S$ are homeomorphisms onto their images. 
We will denote by $\mathcal{S}$ the inverse semigroup of open bisections of $G$, 
and by $\mathcal{C} \subset \mathcal{S}$ the sub-inverse semigroup of compact open bisections.
There is a homomorphism $\theta$ from $\mathcal{S}$ to the inverse semigroup of homeomorphisms between open subsets of $G^{(0)}$, 
given by $\theta_U:=r\circ (s|_U)^{-1}\colon s(U)\LRA r(U)$. 
As observed in~\cite{MR2460017}, $\theta$ is injective if and only if $G$ is \emph{essentially principal} (that is, $\Int\{g\in G:r(g)=s(g)\}=G^{(0)}$).

In the following we let $C_c(G)$ be the collection of complex valued, continuous and compactly supported functions on $G$.
This is a $^*$-algebra with the convolution product
\[
    f\star g(\gamma) = \sum_{\alpha \beta = \gamma} f(\alpha) g(\beta),
\]
for $f, g\in C_c(G)$ and $\gamma\in G$, and $^*$-involution $f^*(\gamma) = \overline{f(\gamma^{-1})}$, for $f\in C_c(G)$ and $\gamma\in G$.

Let $\mathrm C^*_r(G)$ and $\mathrm C^*(G)$ denote the reduced and full groupoid $\mathrm C^*$-algebras, respectively. 
For an introduction to (étale) groupoids and their $C^*$-algebras, the reader is refered to, e.g., \cite{putnam} or~\cite{Simsnotes}.

If $G$ is minimal and essentially principal, then $\mathrm C^*_r(G)$ is simple (see, e.g.,~\cite[Proposition 4.3.7]{Simsnotes}).

%As a consequence of \cite[Theorem 4.4]{MR2745642}, if $G$ is second countable, minimal and essentially principal, then $C^*_r(G)$ is simple. 

A regular Borel measure $\mu$ on $G^{(0)}$ is \emph{$G$-invariant} if $\mu(r(S))=\mu(s(S))$, for each $S\in\mathcal{S}$. 
Clearly, $\mu$ is $G$-invariant if and only if $\mu(r(U))=\mu(s(U))$, for each $U\in\mathcal{C}$. 
The following proposition is well-known.

\begin{proposition}\label{putnam}
Let $G$ be an ample groupoid with compact unit space. The following conditions are equivalent:
\begin{enumerate}
    \item[(i)] $G^{(0)}$ admits a $G$-invariant probability measure;
    \item[(ii)] $\mathrm C^*_r(G)$ admits a tracial state;
    \item[(iii)] $\mathrm C^*(G)$ admits a tracial state.
\end{enumerate}
\end{proposition}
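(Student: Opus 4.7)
My plan is to establish the cycle $\mathrm{(i)}\Rightarrow\mathrm{(ii)}\Rightarrow\mathrm{(iii)}\Rightarrow\mathrm{(i)}$, with the two implications involving passage between full/reduced algebras being essentially formal and the substantive content residing in $\mathrm{(i)}\Rightarrow\mathrm{(ii)}$.

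For $\mathrm{(ii)}\Rightarrow\mathrm{(iii)}$, I would simply compose a tracial state on $\mathrm C^*_r(G)$ with the canonical quotient $\mathrm C^*(G)\twoheadrightarrow\mathrm C^*_r(G)$. For $\mathrm{(iii)}\Rightarrow\mathrm{(i)}$, given a tracial state $\tau$ on $\mathrm C^*(G)$, I note that compactness of $G^{(0)}$ embeds $C(G^{(0)})$ as a unital $\mathrm C^*$-subalgebra of $\mathrm C^*(G)$, so $\tau|_{C(G^{(0)})}$ is a state and by Riesz--Markov corresponds to a regular Borel probability measure $\mu$ on $G^{(0)}$. For any $U\in\mathcal{C}$, the element $1_U\in\mathrm C^*(G)$ is a partial isometry satisfying $1_U 1_U^*=1_{r(U)}$ and $1_U^*1_U=1_{s(U)}$, so the trace identity $\tau(1_U 1_U^*)=\tau(1_U^*1_U)$ immediately gives $\mu(r(U))=\mu(s(U))$, yielding $G$-invariance.

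For $\mathrm{(i)}\Rightarrow\mathrm{(ii)}$, I would set $\tau:=\mu\circ E$, where $E\colon\mathrm C^*_r(G)\LRA C(G^{(0)})$ is the canonical faithful conditional expectation (restricting on $C_c(G)$ to $f\mapsto f|_{G^{(0)}}$). That $\tau$ is a state is immediate; the content is the trace identity. By density and continuity it suffices to check $\tau(1_U\star 1_V)=\tau(1_V\star 1_U)$ for $U,V\in\mathcal{C}$, since a partition-of-unity argument together with the $C(G^{(0)})$-bimodule structure reduces a general pair in $C_c(G)$ to this case. Now $1_U\star 1_V=1_{UV}$ because $U,V$ are bisections, and a direct computation in the groupoid (using that $\alpha\beta\in G^{(0)}$ with $\alpha\in U,\beta\in V$ forces $\beta=\alpha^{-1}$) gives $UV\cap G^{(0)}=r(U\cap V^{-1})$ and, symmetrically, $VU\cap G^{(0)}=s(U\cap V^{-1})$. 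Since $W:=U\cap V^{-1}$ is a compact open bisection (the intersection of compact open bisections is such, as $G$ is Hausdorff), the desired identity reduces to $\mu(r(W))=\mu(s(W))$, which is precisely the $G$-invariance of $\mu$.

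The main (and only modest) obstacle is the bookkeeping that identifies $UV\cap G^{(0)}$ and $VU\cap G^{(0)}$ with the range and source of a common bisection in $\mathcal{C}$; this is the one point at which $G$-invariance is actually invoked. Everything else is routine use of the étale groupoid $\mathrm C^*$-algebra machinery.
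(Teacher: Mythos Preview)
Your proof is correct and follows essentially the same route as the paper: the cycle $\mathrm{(i)}\Rightarrow\mathrm{(ii)}\Rightarrow\mathrm{(iii)}\Rightarrow\mathrm{(i)}$, with your $\mathrm{(iii)}\Rightarrow\mathrm{(i)}$ argument via $\tau(1_U 1_U^*)=\tau(1_U^* 1_U)$ matching the paper's verbatim. The only difference is that the paper cites \cite[Theorem~3.4.4]{putnam} for $\mathrm{(i)}\Rightarrow\mathrm{(ii)}\Rightarrow\mathrm{(iii)}$, whereas you spell out the standard $\tau=\mu\circ E$ construction and the bisection computation $UV\cap G^{(0)}=r(U\cap V^{-1})$, $VU\cap G^{(0)}=s(U\cap V^{-1})$ directly; this is exactly the argument behind the cited reference, so there is no substantive divergence.
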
 

\begin{proof}
The proof of the implications (i)$\implies$(ii)$\implies$(iii) can be found in~\cite[Theorem 3.4.4]{putnam}.

(iii) $\implies$ (i):
Let $\tau$ be a tracial state on $\mathrm C^*(G)$. 
Given $U\in\mathcal{C}$, we have
\[
    \tau(1_{r(U)})=\tau(1_U1_{U^{-1}})=\tau(1_{U^{-1}}1_U)=\tau(1_{s(U)}).
\]
Thus, the probability measure on $G^{(0)}$ induced by $\tau|_{C(G^{(0)})}$ is $G$-invariant.
\end{proof}

Suppose $G^{(0)}$ admits a $G$-invariant measure $\mu$.
Then there is a representation $\rho\colon C_c(G)\LRA B(L^2(G^{(0)},\mu))$ given by 
\begin{equation}
    (\rho(f)(\xi))(x):=\sum_{g\in r^{-1}(x)}f(g)\xi(s(g)),\label{rho}
\end{equation}
for $f\in C_c(G)$, $\xi\in L^2(G^{(0)}, \mu)$ and $x\in G^{(0)}$. 

Note that $\rho|_{C(G^{(0)})}$ is the representation by multiplication operators. Moreover, if $U$ is a compact open bisection, then
\[
    (\rho(1_U)(\xi))(x)=
    \begin{cases}
        \xi(\theta_U^ {-1}(x)), & x\in s(U), \\
        0, & x\notin  s(U),
    \end{cases}
\]
for $\xi\in L^2(G^{(0)}, \mu)$ and $x\in G^{(0)}$.

\subsection{Topological full groups}

Given an ample groupoid $G$ with compact unit space, the \emph{topological full group} of $G$ is 
\[
    [[G]] := \{ U\in \mathcal{C}\mid r(U) = s(U) = G^{(0)} \}.
\]
This definition coincides with the one from~\cite{MR3904185}.
In~\cite{MR2876963}, however, H.~Matui defines the topological full group of $G$ as $\theta([[G]])$. 
Therefore, if $G$ is essentially principal then $\theta$ is injective and the two definitions coincide.

Two examples to have in mind are as follows.

\begin{example}\label{tg}
Let $\varphi$ be an action of a group $\Gamma$ on a compact Hausdorff space $X$. 
As a space, the \emph{transformation groupoid} associated with $\varphi$ is $G_\varphi:=\Gamma\times X$ equipped with the product topology.
The product of two elements $(h,y), (g,x)\in G_\varphi$ is defined if and only if $y=gx$ in which case $(h,gx)(g,x):=(hg,x)$.
Inversion is given by $(g,x)^{-1}:=(g^{-1},gx)$.
The unit space $G^{(0)}$ is naturally identified with $X$ and $G_\varphi$ is ample if $X$ is totally disconnected.

The topological full group $[[G_\varphi]]$ consists of sets of the form $\bigcup_{i=1}^n\{g_i\}\times A_i$, 
where $g_1,\dots,g_n\in\Gamma$ and $A_1,\dots,A_n\subset X$ are clopen sets such that 
\[
    X = \bigsqcup_{i=1}^n A_i = \bigsqcup_{i=1}^n g_iA_i.
\]
In particular, there is a canonical injective homomorphism $\Gamma\LRA [[G_\varphi]]$ sending $g\longmapsto \{g\}\times X$.
\end{example}

\begin{example}\label{full}
Let $X:=\{0,1\}^\mathbb{N}$ be the full one-sided $2$-shift and consider the Deaconu-Renault groupoid
\[
    G_{[2]}:= \{(y,n,x)\in X\times\mathbb{Z}\times X \mid \exists l,k\in \N: n = l-k, y_{l+i}=x_{k+i}~\forall i\in\mathbb{N}\}.
\]
The product of $(z,n,y'),(y,m,x)\in G_{[2]}$ is well-defined if and only if $y'=y$ in which case $(z,n,y)(y,m,x):=(z,n+m,x)$. 
Inversion is given by $(y,n,x)^{-1}:=(x,-n,y)$. 

Let $X_f$ be the set of finite words (including the empty word) on the alphabet $\{0,1\}$. 
Given $\alpha\in X_f$, let $|\alpha|$ denote its length and let $\overline{\alpha}:=\{x\in X \mid x_i=\alpha_i, 0\leq i < |\alpha|\}$ be the cylinder set of $\alpha$.
The topology on $G_{[2]}$ is generated by sets of the form 
\[
    Z(\beta,\alpha):= \{(y,|\beta|-|\alpha|,x)\in G_{[2]} \mid y\in\overline{\beta}, x\in\overline{\alpha}, y_{|\beta|+i}=x_{|\alpha|+i}~\forall i\in\mathbb{N}\},
\]
for $\alpha,\beta\in X_f$.
This topology is strictly finer than the one inherited from the product topology and $G_{[2]}$ is ample with compact unit space. Note as well that $G_{[2]}$ is minimal.

The topological full group $[[G_{[2]}]]$ consists of sets of the form 
\begin{equation}\label{top}
    \bigcup_{j=1}^n Z(\beta^j,\alpha^j),
\end{equation}
with $X = \bigsqcup_{j=1}^n \overline{\alpha^j} = \bigsqcup_{j=1}^n \overline{\beta^j}$.

We would now like to recall the isomorphism between Thompson's group $V$ and $[[G_{[2]}]]$, observed in~\cite{MR3377390} 
(see also~\cite{McT} and~\cite{MR2119267}).

Thompson's group $V$ consists of piecewise linear, right continuous bijections on $[0,1)$ which have finitely many points of non-differentiability, all being dyadic rationals, and have a derivative which is a power of $2$ at each point of differentiability.

Given $\alpha,\beta\in X_f$, let $\psi(\alpha):=\sum_i \alpha_i 2^{-i}\in[0,1)$ and $I(\alpha):=[\psi(\alpha),\psi(\alpha)+2^{-|\alpha|})$. 
The isomorphism from $[[G_{[2]}]]$ to $V$ takes $\bigcup_j Z(\beta^j,\alpha^j)$ as in \eqref{top} and sends it to the bijection on $[0,1)$ which, 
restricted to $I(\alpha^j)$, is linear, increasing and onto $I(\beta^j)$, for every $j$.
\end{example}

The next example shows that the short exact sequence induced by the quotient $\theta\colon [[G]]\LRA \theta({[[G]]})$ is not always split. 
Since we are interested in studying the canonical representation of $[[G]]$ in $\mathrm C^*(G)$, 
this illustrates why we have chosen to treat the topological full group as bisections, rather than homeomorphisms on the unit space.

\begin{example}
Let $X:=\mathbb{Z}\cup\{\infty\}$ be the one-point compactification of $\mathbb{Z}$
and define an action $\varphi\colon \Z\curvearrowright X$ by 
\[
    \varphi_n(x):=
    \begin{cases}
        {(-1)}^n x, & x\in \Z, \\
        \infty, & x = \infty,
    \end{cases}
\]
for $n\in\mathbb{Z}$.
Note that $\{1\}\times X$ is a compact open bisection in the transformation groupoid $G_\varphi$ and that the homeomorphism
\[
    \theta_{\{1\}\times X}(x) =
    \begin{cases}
        -x, & x\in \Z, \\
        \infty, & x = \infty,
    \end{cases}
\]
for $x\in X$, has order $2$. 

Moreover, for any $U\in[[G_\varphi]]$ satisfying $\theta_U=\theta_{\{1\}\times X}$, there is an odd integer $n$ such that $(n,\infty)\in U$. 
In particular, $U$ has infinite order. 
Therefore, the short exact sequence induced by $\theta\colon [[G_\varphi]]\LRA \theta([[G_\varphi]])$ is not split.
\end{example}

\subsection{Unitary representations}

Let $G$ be an an ample groupoid with compact unit space.
There is a unitary representation 
\begin{align*}
    \pi\colon [[G]]&\LRA \mathrm C^*(G)\\ 
    U&\longmapsto 1_U,
\end{align*}
We will denote the analogous representation of $[[G]]$ in $\mathrm C^*_r(G)$ by $\pi_r$.

If $\sigma$ and $\eta$ are unitary representations of a group $\Gamma$ on unital $\mathrm C^*$-algebras, then $\sigma$ is said to \emph{weakly contain} $\eta$ if 
\[
    \left\| \sum_i \alpha_i \eta(g_i) \right\| \leq \left\| \sum_i \alpha_i \sigma(g_i) \right\|,
\]
for every $\sum_i \alpha_i g_i\in \mathbb{C}\Gamma$.
The \emph{trivial representation} $\Gamma\LRA\mathbb{C}$ satisfies $g\longmapsto 1$, for every $g\in \Gamma$.

Given a unitary representation $\eta$ of $\Gamma$ on a unital $\mathrm C^*$-algebra $A$, 
we denote by $\mathrm C^*_\eta(\Gamma)$ the $\mathrm C^*$-algebra generated by the image of $\eta$. 
Note that if $\eta$ weakly contains the trivial representation,
then $\mathrm C^*_\eta(\Gamma)$ admits a character whose kernel is $\overline{\Span}\{1_A-\eta(g)\mid g\in\Gamma\}$.

\begin{proposition}\label{bonitinho}
    Let $\eta$ be a unitary representation of a group $\Gamma$ on a unital $\mathrm C^*$-algebra $A$. 
    Then $\eta$ weakly contains the trivial representation if and only if $1_A\notin\overline{\Span}\{1_A - \eta(g)\mid g\in \Gamma\}$.
\end{proposition}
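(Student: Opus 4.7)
The plan is to observe that both directions of the proposition are captured by the existence of a character on $B := \mathrm C^*_\eta(\Gamma)$ sending every $\eta(g)$ to $1$, or equivalently a character vanishing on $J := \overline{\Span}\{1_A - \eta(g) \mid g \in \Gamma\}$. The forward direction is then immediate from the remark preceding the statement: if $\eta$ weakly contains the trivial representation, there is a character on $\mathrm C^*_\eta(\Gamma)$ whose kernel is precisely $J$, and since $\chi(1_A)=1$, we have $1_A \notin J$.

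For the converse, I would construct such a character directly from the hypothesis. The starting point is that $\Span\{\eta(g) \mid g \in \Gamma\}$ is a unital dense $*$-subalgebra of $B$ (as $\eta$ is a group representation), and via the identity $\eta(g) = 1_A - (1_A - \eta(g))$ it is contained in $\mathbb{C}\cdot 1_A + J$. Because $J$ is closed and $\mathbb{C}\cdot 1_A$ is one-dimensional, $\mathbb{C}\cdot 1_A + J$ is closed in $B$ (a standard Banach-space fact: the sum of a closed subspace with a finite-dimensional subspace is closed), and therefore equals $B$. The hypothesis $1_A \notin J$ turns this into a direct sum decomposition $B = \mathbb{C}\cdot 1_A \oplus J$, which produces a bounded linear functional $\chi \colon B \to \mathbb{C}$ defined as the projection onto the first factor.

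It then remains to verify that $\chi$ is multiplicative. I would check this on the dense $*$-subalgebra $\Span\{\eta(g)\}$, where $\chi$ coincides with the augmentation $\sum_i \alpha_i \eta(g_i) \mapsto \sum_i \alpha_i$; multiplicativity there is trivial, since $\chi(\eta(g)\eta(h)) = \chi(\eta(gh)) = 1 = \chi(\eta(g))\chi(\eta(h))$, and continuity extends the property to all of $B$. Since any character on a unital $\mathrm C^*$-algebra has norm $1$, one obtains
\[
    \left| \sum_i \alpha_i \right| = \left| \chi\!\left( \sum_i \alpha_i \eta(g_i) \right) \right| \leq \left\| \sum_i \alpha_i \eta(g_i) \right\|,
\]
which is the weak containment of the trivial representation. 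The only mildly delicate point in this plan is the closedness of $\mathbb{C}\cdot 1_A + J$, since without it the candidate functional $\chi$ need not be well defined or bounded; everything else reduces to unwinding definitions.
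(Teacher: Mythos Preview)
Your proof is correct. Both you and the paper reduce the backward implication to producing a norm-one functional on $\mathrm C^*_\eta(\Gamma)$ sending each $\eta(g)$ to $1$; the difference lies only in how that functional is obtained. The paper observes that $J$ is itself a $\mathrm C^*$-subalgebra (since $(1_A-\eta(g))(1_A-\eta(h)) = (1_A-\eta(g))+(1_A-\eta(h))-(1_A-\eta(gh))$), and then invokes the fact that a $\mathrm C^*$-subalgebra not containing the unit has distance exactly $1$ from the unit; the inequality $\bigl|\sum\alpha_i\bigr|\le\bigl\|\sum\alpha_i\eta(g_i)\bigr\|$ then drops out in one line by writing $\sum\alpha_i\eta(g_i)=(\sum\alpha_i)1_A-\sum\alpha_i(1_A-\eta(g_i))$. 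Your route instead uses the Banach-space fact that a closed subspace plus a finite-dimensional one is closed, obtains the direct-sum splitting $\mathrm C^*_\eta(\Gamma)=\mathbb{C}1_A\oplus J$, and verifies by hand that the resulting projection is multiplicative before appealing to automatic continuity of characters. The paper's argument is shorter because it exploits the $\mathrm C^*$-structure of $J$ up front; yours is a touch more elementary in that it never needs $J$ to be a subalgebra, only a closed subspace, at the cost of a few extra verifications.
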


\begin{proof}
The forward implication is evident, so we only prove the backward one.

Let $B:=\overline{\Span}\{1_A-\eta(g):g\in \Gamma\}$. If $1_A\notin B$, then, since $B$ is a $\mathrm C^*$-algebra, $\mathrm{dist}(1_A,B)=1$.
Hence, for every $\alpha_1,\dots,\alpha_n\in\mathbb{C}$ and $g_1,\dots,g_n\in \Gamma$, we have that
\[
    \left\|\sum\alpha_i\eta(g_i)\right\| = \left\|\left(\sum\alpha_i\right).1_A-\sum\alpha_i(1_A-\eta(g_i))\right\| \geq \left|\sum\alpha_i\right|,
\]
thus showing that $\eta$ weakly contains the trivial representation.
\end{proof}

%%%%%%%%%%%%%%%%%%%%%%%%%%%%%%%%%%%%%%%%%%%%%%%%%%%%%%%%%%%%%%%%%%%%%
%%%%%%%%%%%%%%%%%%%%%%%%%%%%%%%%%%%%%%%%%%%%%%%%%%%%%%%%%%%%%%%%%%%%%
\section{Groups covering groupoids}\label{cover}

An ample groupoid $G$ can always be covered by compact open bisections. 
We investigate to which degree $G$ can be covered by compact open bisections $U$ which satisfy $r(U)=s(U)=G^{(0)}$.
We show that if $\Gamma\leq [[G]]$ covers $G$ and $\mu$ is a $\Gamma$-invariant probability measure on $G^{(0)}$,
then $\mu$ is also $G$-invariant.

\begin{definition}
    Given an ample groupoid $G$ with compact unit space, we say that a subgroup $\Gamma\leq [[G]]$ \emph{covers} $G$ if $G=\bigcup_{U\in\Gamma}U$. 
\end{definition}

The idea of covering a groupoid $G$ by compact open bisections $U$ such that $r(U)=s(U)=G^{(0)}$ has already appeared in 
H.~Matui's study of automorphisms of $G$, cf.~\cite[Proposition 5.7]{MR2876963}.

If $G$ is essentially principal, then a subgroup $\Gamma\leq [[G]]$ covers $G$ if and only if, 
for each open bisection $S$ and $x\in s(S)$, there are $U\in\Gamma$ and a neighborhood $W\subset s(S)$ of $x$ such that $\theta_U|_W=\theta_S|_W$. 

\begin{example}
If $\varphi$ is an action of a group $\Gamma$ on a compact Hausdorff and totally disconnected space, then the copy of $\Gamma$ in $[[G_\varphi]]$ covers $G_\varphi$.
\end{example}

\begin{example}
Recall that Thompson's group $T< V$ consists of the elements of Thompson's group $V$ (see Example~\ref{full}) which have at most one point of discontinuity.

Let $G_{[2]}$ be the groupoid of Example~\ref{full}. 
Under the identification of $V$ with $[[G_{[2]}]]$, $T$ covers $G_{[2]}$. 
This follows from the fact that if $I,J\subset[0,1)$ are left-closed and right-open intervals with endpoints in $\mathbb{Z}[1/2]$, 
then there exists a piecewise linear homeomorphism $f\colon I\LRA J$ with a derivative which is a power of $2$ at each point of differentiability
and with finitely many points of non-differentiability, all of which belong to $\mathbb{Z}[1/2]$.
\end{example}

\begin{lemma}\label{cuida}
Let $G$ be an ample groupoid with compact unit space.
If $|G(x)|\geq 2$ for every $x\in G^{(0)}$, then $[[G]]$ covers $G$.
\end{lemma}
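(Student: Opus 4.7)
The plan is to show that every $g \in G$ lies in some $U \in [[G]]$, splitting into three cases: (a) $g$ is a unit, (b) $g$ is not a unit and $s(g) \neq r(g)$, and (c) $g$ is a non-unit isotropy element, i.e.\ $s(g) = r(g)$ but $g \notin G^{(0)}$. The orbit hypothesis is only used in case (c).

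For case (a) I would just take $U := G^{(0)}$, which is a compact open bisection in $[[G]]$. For case (b), using that $G^{(0)}$ is locally compact, Hausdorff and totally disconnected, I choose disjoint compact open neighborhoods $A$ of $s(g)$ and $B$ of $r(g)$ in $G^{(0)}$. Starting from any compact open bisection $V_0 \ni g$, I shrink to $V := V_0 \cap s^{-1}(A) \cap r^{-1}(B)$, which is still a compact open bisection containing $g$ and now satisfies $s(V) \cap r(V) = \emptyset$. I then set
\[
    U := V \sqcup V^{-1} \sqcup \bigl(G^{(0)} \setminus (s(V) \cup r(V))\bigr).
\]
The condition $s(V) \cap r(V) = \emptyset$ forces these three pieces to be pairwise disjoint and ensures their source maps (and range maps) have disjoint images covering $G^{(0)}$, so $U$ is a compact open bisection with $r(U) = s(U) = G^{(0)}$, hence $U \in [[G]]$, and $g \in V \subset U$.

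For case (c), with $s(g) = r(g) = x$ and $g \neq x$, the assumption $|G(x)| \geq 2$ gives some $y \in G(x)$ with $y \neq x$, i.e.\ an element $h \in G$ with $s(h) = x$ and $r(h) = y$. Then $g = h^{-1}\cdot (hg)$ is a composable product in which $hg$ has source $x$ and range $y \neq x$, and $h^{-1}$ has source $y$ and range $x$; both factors thus fall into case (b). Applying case (b) produces $U_1, U_2 \in [[G]]$ with $h^{-1} \in U_1$ and $hg \in U_2$, and then $g = h^{-1}\cdot (hg) \in U_1 U_2 \in [[G]]$, since $[[G]]$ is closed under the inverse-semigroup product.

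The main obstacle is case (c): the orbit hypothesis says nothing about isotropy at $x$, and the direct construction from case (b) breaks down because one cannot separate $s(g)$ from $r(g)$ by disjoint open sets. The key trick is to trade an isotropy element for a composition of two elements that visibly leave the diagonal, using a single non-trivial orbit arrow at $x$; once that factorization is in place, case (b) and the fact that $[[G]]$ is a group finish the argument.
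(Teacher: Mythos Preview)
Your proof is correct and follows essentially the same approach as the paper's: the paper handles the case $r(g)\neq s(g)$ by the identical involution-plus-complement construction $U=V\cup V^{-1}\cup(G^{(0)}\setminus(s(V)\cup r(V)))$, and in the isotropy case it picks $h$ with $s(h)=r(g)$, $r(h)\neq s(h)$, then writes $g\in U^{-1}U'$ with $h\in U$, $hg\in U'$---exactly your factorization trick. Your extra case (a) for units is harmless (the paper absorbs it into the $r(g)=s(g)$ case), and your explicit shrinking of $V_0$ via disjoint clopen neighborhoods just spells out what the paper asserts in one line.
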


\begin{proof}
Let $g\in G$. 
If $r(g)\neq s(g)$, then there is a compact open bisection $V$ containing $g$ and such that $s(V)\cap r(V)=\emptyset$. 
Let $U := V\cup V^{-1}\cup (G^{(0)}\setminus(s(V)\cup r(V)))$. 
Then $g\in U\in[[G]]$.

If $r(g)=s(g)$, then there is $h\in s^{-1}(r(g))$ such that $r(hg)=r(h)\neq s(h)=s(hg)$ since $|G(r(g))|\geq 2$. 
As before, there are $U,U'\in[[G ]]$ such that $h\in U$ and $hg\in U'$. 
Hence, $g\in U^{-1}U'\in [[G]]$. 
\end{proof}

The purpose of the next example is to show that the above result may fail if one does not make any assumption on the orbits. 

\begin{example}\label{cor}
Consider $X:=\mathbb{Z}\cup\{\pm\infty\}$ equipped with the order topology and let $\varphi\colon \Z\curvearrowright X$ be the action given by $\varphi_t(x):=t+x$, 
for $t\in\Z$ and $x\in X$.
The transformation groupoid $G_\varphi$ is ample with compact unit space.

Given $x,z\in X$ we put $[x,z] := \{y\in X:x\leq y \leq z\}$.
Then 
\[
    H := \{(t,x)\in \mathbb{Z}\times [0,+\infty]: -t\leq x\}
\]
is an ample subgroupoid of $G_\varphi$. 
Incidentally, this is the groupoid of the partial action obtained by restricting $\varphi$ to $[0,+\infty]$ (see \cite{MR3699795} and \cite{MR3694599} for more details).
Observe that $|H((0,+\infty))| = 1$.

We claim that if $U\in[[H]]$, then $(1,+\infty)\notin U$. 
Otherwise, there is $t\in\mathbb{N}$ such that $S:=\{1\}\times[t,+\infty]\subset U$ and $U\setminus S\in\mathcal{C}$.
But then $s(U\setminus S)=[0,t-1]$ and $r(U\setminus S)=[0,t]$ contradicting the fact that $r$ and $s$ are injective on $U\setminus S$. 
Hence, $(1,+\infty)\notin U$ and $[[H]]$ does not cover $H$.
\end{example}

Recall that a probability measure $\mu$ on $G^{(0)}$ is \emph{$G$-invariant} if $\mu(s(S)) = \mu(r(S))$ for every $S\in \mathcal{S}$.
Moreover, if $\Gamma\leq [ [G]]$, then we say $\mu$ is \emph{$\Gamma$-invariant} if it is invariant with respect to the action $\theta$.

\begin{proposition}\label{surpre}
Let $G$ be an ample groupoid with compact unit space and $\Gamma$ a subgroup of $[[G]]$. Consider the following conditions:

\begin{enumerate}
\item[(i)] $G^{(0)}$ admits a $G$-invariant probability measure;
\item[(ii)]$\pi|_\Gamma$ weakly contains the trivial representation;
\item[(iii)] $C^*_\pi(\Gamma)$ admits a character;
\item[(iv)] $G^{(0)}$ admits a $\Gamma$-invariant probability measure.
\end{enumerate}
Then (i)$\implies$(ii)$\implies$(iii)$\implies$(iv). 
If $\Gamma$ covers $G$, then (iv) $\implies$ (i) and all conditions are equivalent.
\end{proposition}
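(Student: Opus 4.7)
For (i)$\implies$(ii), the plan is to exploit the representation $\rho\colon C^*(G)\to B(L^2(G^{(0)},\mu))$ associated with a $G$-invariant probability measure $\mu$. Formula~\eqref{rho} shows that if $U\in[[G]]$, then the constant function $\xi\equiv 1\in L^2(G^{(0)},\mu)$ satisfies $\rho(1_U)\xi=\xi$, because $r(U)=G^{(0)}$ and each fibre $U\cap r^{-1}(x)$ is a singleton. Since $\|\xi\|=1$, the chain of inequalities
\[
    \left|\sum_i\alpha_i\right|=\left|\left\langle\sum_i\alpha_i\rho(\pi(g_i))\xi,\xi\right\rangle\right|\leq\left\|\sum_i\alpha_i\rho(\pi(g_i))\right\|\leq\left\|\sum_i\alpha_i\pi(g_i)\right\|
\]
yields the weak containment of the trivial representation. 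The implication (ii)$\implies$(iii) is exactly the observation recorded just before Proposition~\ref{bonitinho}.

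The heart of the argument is (iii)$\implies$(iv). Given a character $\chi\colon C^*_\pi(\Gamma)\to\mathbb{C}$, I would extend it by Hahn--Banach to a state $\tilde\chi$ on $C^*(G)$, and let $\mu$ be the probability measure on $G^{(0)}$ induced by $\tilde\chi|_{C(G^{(0)})}$. For each $U\in\Gamma$, the element $1_U$ is a unitary in $C^*(G)$ and $\tilde\chi(1_U^*1_U)=1=|\chi(1_U)|^2=|\tilde\chi(1_U)|^2$, so $1_U$ lies in the multiplicative domain of $\tilde\chi$ from both sides. Using $1_U 1_W 1_{U^{-1}}=1_{\theta_U(W)}$ for every clopen $W\subseteq G^{(0)}$, one then computes
\[
    \mu(\theta_U(W))=\tilde\chi(1_U)\,\tilde\chi(1_W)\,\tilde\chi(1_{U^{-1}})=\chi(1_U\cdot 1_{U^{-1}})\,\mu(W)=\mu(W),
\]
and the regularity of $\mu$ upgrades this to full $\Gamma$-invariance. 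I expect this step to be the main obstacle, since $C^*_\pi(\Gamma)$ typically does not contain $C(G^{(0)})$, so the character $\chi$ does not \emph{a priori} see the measure at all; the multiplicative-domain trick is what forces any Hahn--Banach extension to become $\Gamma$-invariant on $C(G^{(0)})$.

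Finally, for (iv)$\implies$(i) under the hypothesis that $\Gamma$ covers $G$: let $\mu$ be a $\Gamma$-invariant probability measure and let $S\in\mathcal{C}$ be arbitrary. Every $g\in S$ is contained in some $U_g\in\Gamma$, and the compactness of $S$ supplies finitely many $U_1,\dots,U_n\in\Gamma$ whose union covers $S$. Intersecting with $S$ and then taking successive differences produces a partition $S=\bigsqcup_{i=1}^n S_i$ with each $S_i$ a compact open bisection contained in $U_i$, so $\theta_{S_i}=\theta_{U_i}|_{s(S_i)}$. The assumed $\Gamma$-invariance of $\mu$ gives $\mu(r(S_i))=\mu(\theta_{U_i}(s(S_i)))=\mu(s(S_i))$, and summing over $i$ (using the injectivity of $r$ and $s$ on $S$) yields $\mu(r(S))=\mu(s(S))$, so $\mu$ is $G$-invariant.
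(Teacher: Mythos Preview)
Your proof is correct and follows essentially the same route as the paper: the representation $\rho$ with invariant vector $1_{G^{(0)}}$ for (i)$\Rightarrow$(ii), the Hahn--Banach/state-extension plus multiplicative-domain argument for (iii)$\Rightarrow$(iv), and the compactness/disjointification argument for (iv)$\Rightarrow$(i). You simply spell out more of the details (the Cauchy--Schwarz equality putting $1_U$ in the multiplicative domain, the identity $1_U 1_W 1_{U^{-1}}=1_{\theta_U(W)}$, the explicit disjointification) than the paper does.
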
 

\begin{proof}
(i)$\implies$(ii): 
Suppose $\mu$ is a $G$-invariant measure on $G^{(0)}$ and let $\rho\colon C_c(G) \LRA B(L^2(G^{(0)},\mu))$ be the representation given by~\eqref{rho}.
The vector $1_{G^{(0)}}\in L^2(G^{(0)},\mu)$ is invariant for the representation $\rho\circ \pi|_\Gamma\colon \Gamma\LRA B(L^2(G^{(0)},\mu))$. 
Hence, $\pi|_\Gamma$ weakly contains the trivial representation.

The implication (ii)$\implies$(iii) is evident.

(iii)$\implies$(iv): 
Let $\varphi$ be a character on $\mathrm C^*_\pi(\Gamma)$ and $\tau$ a state on $\mathrm C^*(G)$ which is an extension of $\varphi$. 
Then $\mathrm C^*_\pi(\Gamma)$ is in the multiplicative domain of $\tau$. 
Clearly, $\tau|_{C(G^{(0)})}$ induces a $\Gamma$-invariant probability measure on $G^{(0)}$.

Now, suppose $\Gamma$ covers $[[G]]$ and let us show that (iv) $\implies$ (i). 
Let $\mu$ be a $\Gamma$-invariant probability measure on $G^{(0)}$.
We claim that $\mu$ is also $G$-invariant.
Indeed, since $\Gamma$ covers $G$, given $S\in\mathcal{C}$, we have that $S=\bigcup_{U\in\Gamma}(S\cap U)$. 
As $S$ is compact, there are $S_1,\dots,S_n\in\mathcal{C}$ and $U_1,\dots, U_n\in\Gamma$ such that $S=\bigsqcup_i S_i$ and $S_i\subset U_i$ for $1\leq i \leq n$. 
In particular, $\theta_{U_i}(s(S_i))=r(S_i)$ for every $i$. 
It follows that
\[
    \mu(r(S))=\sum_{i=1}^n \mu(r(S_i)) = \sum_{i=1}^n \mu(s(S_i))=\mu(s(S)). 
\]
Therefore, $\mu$ is a $G$-invariant probability measure on $G^{(0)}$.
\end{proof}

\begin{corollary}\label{nice}
Let $G$ be an ample groupoid with compact unit space. The following conditions are equivalent:
\begin{enumerate}
    \item[(i)] $G^{(0)}$ admits a $G$-invariant probability measure;
    \item[(ii)] $\pi$ weakly contains the trivial representation;
    \item[(iii)] $\mathrm C^*_\pi([[G]])$ admits a character;
    \item[(iv)] $G^{(0)}$ admits a $[[G]]$-invariant probability measure.
\end{enumerate}
\end{corollary}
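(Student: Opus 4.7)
The plan is to piggyback on Proposition~\ref{surpre} applied with $\Gamma = [[G]]$. That proposition already supplies the forward chain (i)$\Rightarrow$(ii)$\Rightarrow$(iii)$\Rightarrow$(iv) unconditionally, so the entire task reduces to establishing (iv)$\Rightarrow$(i). Proposition~\ref{surpre} delivers this final implication under the extra hypothesis that $\Gamma$ covers $G$, so I would split into two cases according to whether that hypothesis is automatic for $\Gamma = [[G]]$.

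First I would dispose of the generic case where every orbit of $G$ contains at least two points. Here Lemma~\ref{cuida} guarantees immediately that $[[G]]$ covers $G$, so the last clause of Proposition~\ref{surpre} applies verbatim and yields (iv)$\Rightarrow$(i).

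Second, I would handle the degenerate case where some $x_0 \in G^{(0)}$ satisfies $G(x_0) = \{x_0\}$. The slick move is to bypass (iv) altogether by verifying (i) directly via the Dirac measure $\delta_{x_0}$. For any open bisection $S \in \mathcal{S}$, I claim $x_0 \in s(S) \iff x_0 \in r(S)$: by symmetry of the orbit relation, any arrow with source or range equal to $x_0$ must have both endpoints in $G(x_0) = \{x_0\}$ and so is a loop at $x_0$. Thus $\delta_{x_0}(s(S)) = \delta_{x_0}(r(S))$, proving $\delta_{x_0}$ is $G$-invariant, so (i) holds unconditionally in this case; the equivalence with (iv) then follows trivially from the forward chain.

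The only subtle point, and the reason the corollary does not drop out of Proposition~\ref{surpre} with no further work, is that the covering hypothesis on $[[G]]$ genuinely can fail—Example~\ref{cor} exhibits precisely such a groupoid. Once one recognises that the obstruction to covering is always the presence of a singleton orbit, the case split above becomes the natural remedy and the remaining verifications are routine.
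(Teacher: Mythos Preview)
Your proof is correct and follows exactly the paper's approach: the forward chain comes from Proposition~\ref{surpre}, and (iv)$\Rightarrow$(i) is handled by the same case split---Lemma~\ref{cuida} plus Proposition~\ref{surpre} when all orbits have at least two points, and the Dirac measure at a singleton-orbit point otherwise. Your write-up is slightly more detailed in verifying that $\delta_{x_0}$ is $G$-invariant, but the argument is the same.
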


\begin{proof}
The implications (i)$\implies$(ii)$\implies$(iii)$\implies$(iv) follow from Proposition~\ref{surpre}.

(iv)$\implies$(i): 
If, for each $x\in G^{(0)}$, $|G(x)|\geq 2$, then the result follows from Lemma~\ref{cuida} and Proposition~\ref{surpre}.

If there is $x\in X$ such that $|G(x)|=1$, then point evaluation at $x$ is a $G$-invariant probability measure. 
\end{proof}

%%%%%%%%%%%%%%%%%%%%%%%%%%%%%%%%%%%%%%%%%%%%%%%%%%%%%%%%%%%%%%%%%%%%%%%%%%%%%%%%%%
%%%%%%%%%%%%%%%%%%%%%%%%%%%%%%%%%%%%%%%%%%%%%%%%%%%%%%%%%%%%%%%%%%%%%%%%%%%%%%%%%%
\section{Representations of topological full groups}\label{mains}

In this section, we prove the main results of the article. We start with two technical lemmas.

\begin{lemma}\label{tec}
Let $G$ be an ample groupoid with compact unit space. 
If $S,T\in[[G]]$ and $W\subset G^{(0)}$ is a clopen subset such that $\theta_S(W), W, \theta_T^{-1}(W)$ are mutually disjoint, 
then $(1-1_S)1_W(1-1_T)\in\Span\{1-1_U\in C_c(G) \mid U\in[[G]]\}$.
\end{lemma}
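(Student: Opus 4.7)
The plan is to expand $(1-1_S)1_W(1-1_T)$ and realize it as an explicit integer combination of two elements of the form $1-1_U$ with $U\in[[G]]$, by extending the partial bisections that appear in the expansion to full bisections using the disjointness hypothesis.

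Since $W\subset G^{(0)}$, the relevant convolution products are indicator functions of compact open bisections: $1_S 1_W = 1_A$, $1_W 1_T = 1_B$, $1_S 1_W 1_T = 1_C$, where $A := S\cap s^{-1}(W)$, $B := T\cap r^{-1}(W)$, and $C := A\cdot B$, with $s(A)=W$, $r(A)=\theta_S(W)$; $r(B)=W$, $s(B)=\theta_T^{-1}(W)$; and $r(C)=\theta_S(W)$, $s(C)=\theta_T^{-1}(W)$. Thus
\[
(1-1_S)\,1_W\,(1-1_T) \;=\; 1_W - 1_A - 1_B + 1_C.
\]
Setting $V := W\cup\theta_S(W)\cup\theta_T^{-1}(W)$, I would define
\[
U_1 := A \sqcup B \sqcup C^{-1} \sqcup (G^{(0)}\setminus V), \qquad U_2 := C \sqcup C^{-1} \sqcup \bigl(G^{(0)}\setminus(\theta_S(W)\cup\theta_T^{-1}(W))\bigr),
\]
so that $\theta_{U_1}$ is the $3$-cycle $W\to\theta_S(W)\to\theta_T^{-1}(W)\to W$ and $\theta_{U_2}$ is the transposition $\theta_S(W)\leftrightarrow\theta_T^{-1}(W)$, each extended by the identity. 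The mutual disjointness of $W$, $\theta_S(W)$, $\theta_T^{-1}(W)$ guarantees that the sources (and the ranges) of the pieces making up each $U_i$ are disjoint and together tile $G^{(0)}$, so $U_1, U_2\in[[G]]$.

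Because none of $A, B, C, C^{-1}$ can meet $G^{(0)}$ (a unit in any of them would force two of $W$, $\theta_S(W)$, $\theta_T^{-1}(W)$ to intersect), their indicator functions split as disjoint sums, giving
\[
1-1_{U_1} \;=\; 1_V - 1_A - 1_B - 1_{C^{-1}}, \qquad 1-1_{U_2} \;=\; 1_{\theta_S(W)\cup\theta_T^{-1}(W)} - 1_C - 1_{C^{-1}}.
\]
Subtracting cancels the $1_{C^{-1}}$ terms and leaves $1_V - 1_{\theta_S(W)\cup\theta_T^{-1}(W)} = 1_W$, so $(1-1_{U_1}) - (1-1_{U_2}) = 1_W - 1_A - 1_B + 1_C$, which is precisely $(1-1_S)1_W(1-1_T)$. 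The only real obstacle is careful bookkeeping: checking that $U_1, U_2\in[[G]]$ and that the various indicator functions decompose as genuine disjoint sums as subsets of $G$ (not merely of $G^{(0)}$), both of which reduce to the disjointness hypothesis.
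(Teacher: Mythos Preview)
Your proof is correct and is essentially the same as the paper's. In the paper's notation $SW=A$, $WT=B$, $SWT=C$, and $W\cup(G^{(0)}\setminus V)=G^{(0)}\setminus(\theta_S(W)\cup\theta_T^{-1}(W))$, so the two bisections the paper writes down are precisely your $U_2$ and $U_1$ (in that order); your presentation is just a bit more explicit about why the pieces tile $G^{(0)}$ and about the geometric picture (3-cycle versus transposition).
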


\begin{proof}
We have
\begin{align*}
    (1-1_S)1_W(1-1_T) &= 1_{SWT}+1_{T^{-1}WS^{-1}}+1_W+1_{G^{(0)}\setminus(\theta_S(W) \cup W\cup\theta_T^{-1}(W))}\\
    & -(1_{T^{-1}WS^{-1}}+1_{G^{(0)}\setminus(\theta_S(W)\cup W\cup\theta_T^{-1}(W))}+1_{SW}+1_{WT}).
\end{align*}
The sets $SWT, T^{-1}WS^{-1}, W$ and $G^{(0)}\setminus(\theta_T^{-1}(W)\cup W\cup\theta_S(W))$ are mutually disjoint
and their union is in $[[G]]$.
This is also the case for the sets $T^{-1}WS^{-1}, SW, WT$ and $G^{(0)}\setminus(\theta_S(W)\cup W\cup\theta_T^{-1}(W))$ and so the result follows.
\end{proof}

In order to employ Lemma~\ref{tec}, the following result will be useful.

\begin{lemma}\label{duv}
Let $G$ be an ample groupoid with compact unit space. 
If $x\in G^{(0)}$ and $y\in G(x)\setminus\{x\}$, then 
\begin{align}
    \Span\{1-1_U \mid U\in[[G]]\} 
    &= \Span\{1_L(1-1_S) \mid S,L\in[[G]],\theta_S(x)=y\}\label{ach1}\\
    &=\Span\{(1-1_T)1_R \mid T,R\in[[G]],\theta_T^{-1}(x)=y\}.\label{ach2}
\end{align}
\end{lemma}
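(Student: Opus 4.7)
The plan is to prove the first equality; the third will then follow by taking adjoints, since the first span is self-adjoint (as $[[G]]$ is closed under inversion) and the third span is exactly the adjoint of the second. The inclusion ``second $\subseteq$ first'' is immediate from $1_L(1-1_S) = (1-1_{LS}) - (1-1_L)$ with $L, LS \in [[G]]$. The work lies in placing every $1 - 1_U$ in the second span $B := \Span\{1_L(1-1_S) \mid L, S \in [[G]],\, \theta_S(x)=y\}$. Two structural facts will drive the argument: (i) $B$ is stable under left multiplication by $1_M$ for $M \in [[G]]$, since $1_M \cdot 1_L(1-1_S) = 1_{ML}(1-1_S)$; and (ii) $1-1_S \in B$ whenever $\theta_S(x)=y$.

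Fix some $S_1 \in [[G]]$ with $\theta_{S_1}(x) = y$: pick an arrow $g$ from $x$ to $y$, a compact open bisection $V$ around $g$ with $s(V) \cap r(V) = \emptyset$, and set $S_1 := V \cup V^{-1} \cup (G^{(0)} \setminus (s(V) \cup r(V)))$. The first step is the claim that every $V \in [[G]]$ with $\theta_V(x) = x$ satisfies $1 - 1_V \in B$. Indeed, since $\theta_{S_1 V}(x) = y$, both $1 - 1_{S_1}$ and $1 - 1_{S_1 V}$ lie in $B$ by (ii), and so does their difference $1_{S_1}(1-1_V) = (1-1_{S_1V}) - (1-1_{S_1})$; left-multiplying by $1_{S_1^{-1}}$ using (i) yields $1 - 1_V \in B$. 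Now for arbitrary $U \in [[G]]$, set $z := \theta_U(x)$ and select $S_0 \in [[G]]$ satisfying $\theta_{S_0}(z) = y$ and $\theta_{S_0}(x) \in \{x, y\}$. Then $1 - 1_{S_0} \in B$ (directly if $\theta_{S_0}(x) = y$, otherwise by the claim just proved), and $1 - 1_{S_0 U} \in B$ since $\theta_{S_0 U}(x) = y$; hence $1_{S_0}(1-1_U) = (1-1_{S_0U}) - (1-1_{S_0}) \in B$, and left-multiplication by $1_{S_0^{-1}}$ using (i) finishes with $1 - 1_U \in B$.

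The main obstacle is constructing $S_0$ in the subcase $z \notin \{x, y\}$, where $\theta_{S_0}$ must swap neighborhoods of $y$ and $z$ while fixing a neighborhood of $x$. I will separate $x, y, z$ by pairwise disjoint clopen neighborhoods, using that $G^{(0)}$ is Hausdorff and totally disconnected, then pick an arrow from $z$ to $y$ (available since $y$ and $z$ lie in the same $G$-orbit), shrink a compact open bisection around this arrow so that its source and range sit inside the chosen clopen neighborhoods of $z$ and $y$ (and in particular avoid the neighborhood of $x$), and finally assemble $S_0$ from this bisection, its inverse, and the identity on the complement in exactly the style of $S_1$. The remaining subcases $z = y$ and $z = x$ are handled trivially by $S_0 = G^{(0)}$ and $S_0 = S_1$, respectively.
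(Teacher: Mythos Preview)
Your proof is correct and follows essentially the same route as the paper's. Both arguments establish the easy inclusion via $1_L(1-1_S)=(1-1_{LS})-(1-1_L)$, then show $1-1_U\in B$ by first treating the case $\theta_U(x)=x$ (multiplying by a bisection sending $x$ to $y$) and then reducing the general case to it by choosing an auxiliary bisection sending $\theta_U(x)$ to $y$ while controlling its action on $x$; finally both deduce \eqref{ach2} by taking adjoints. Your version is simply more explicit about the construction of the auxiliary bisections and about the left-$[[G]]$-invariance of $B$, which the paper leaves implicit.
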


\begin{proof}
Let 
\[
    B:=\Span\{1_L(1-1_S) \mid S,L\in[[G]],\theta_S(x)=y\}
\]
and take $U\in[[G]]$.
We will show that $1-1_U\in B$.

If $\theta_U(x)=x$, we take $L\in[[G]]$ such that $\theta_{LU}(x)=\theta_L(x)=y$. 
Then $1-1_U=1_{L^{-1}}(1_L-1)+1_{L^{-1}}(1-1_{LU})\in B$. 

On the other hand, if $\theta_U(x)\neq x$, we take $L\in[[G]]$ such that $\theta_L(x)=x$ and $\theta_{LU}(x)=y$. 
Then $\theta_{L^{-1}}(x) = x$ so $1-1_{L^{-1}}\in B$ by the above.
Hence $1-1_U = (1- 1_{L^{-1}}) + 1_{L^{-1}}(1-1_{LU})\in B$ proving~\eqref{ach1}. 

By taking adjoints and interchanging $x$ and $y$, the equality in \eqref{ach2} follows from \eqref{ach1}.
\end{proof}

The next result generalizes~\cite[Theorem 3.7]{scarparo2017c}, which was obtained in the setting of Cantor minimal $\mathbb{Z}$-systems.

\begin{theorem}\label{imp}
Let $G$ be an ample groupoid with compact unit space.
If $|G(x)|\geq 3$ for every $x\in G^{(0)}$, then $\overline{\Span}\{1-1_U\in \mathrm C^*(G) \mid U\in[[G]]\}$ is a hereditary $\mathrm C^*$-subalgebra of $\mathrm C^*(G)$.
\end{theorem}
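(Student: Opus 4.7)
The plan is to verify in turn that $B:=\overline{\Span}\{1-1_U\mid U\in[[G]]\}$ is a $\mathrm C^*$-subalgebra of $\mathrm C^*(G)$ and that it satisfies $B\cdot\mathrm C^*(G)\cdot B\subseteq B$. The $\mathrm C^*$-subalgebra part is immediate from $(1-1_U)^*=1-1_{U^{-1}}$ and $(1-1_S)(1-1_T)=(1-1_S)+(1-1_T)-(1-1_{ST})$, both of which lie in $\Span\{1-1_U\mid U\in[[G]]\}$ because $[[G]]$ is closed under inverse and product. By linearity and continuity, and using that $\Span\{1_V\mid V\in\mathcal{C}\}$ is dense in $\mathrm C^*(G)$, the hereditariness condition reduces to showing $(1-1_S)\,1_V\,(1-1_T)\in B$ for every $S,T\in[[G]]$ and $V\in\mathcal{C}$.

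First I would reduce to the case $V\subseteq G^{(0)}$. Since $|G(x)|\ge 3\ge 2$, Lemma~\ref{cuida} gives that $[[G]]$ covers $G$, so by compactness $V$ is a finite disjoint union $V=\bigsqcup_i V_i$ with each $V_i\subseteq U_i$ for some $U_i\in[[G]]$. Writing $W_i:=s(V_i)$ we have $1_{V_i}=1_{U_i}1_{W_i}$, and the identity
\[
(1-1_S)1_{U_i}=(1-1_{SU_i})-(1-1_{U_i})
\]
rewrites each $(1-1_S)1_{V_i}(1-1_T)$ as a difference of expressions of the shape $(1-1_{S'})1_{W_i}(1-1_T)$ with $W_i\subseteq G^{(0)}$ clopen. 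Hence it suffices to prove $(1-1_S)1_W(1-1_T)\in B$ whenever $W\subseteq G^{(0)}$ is clopen.

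The core step combines Lemmas~\ref{tec} and~\ref{duv}. For a fixed $x\in W$, pick distinct $y_1,y_2\in G(x)\setminus\{x\}$ using $|G(x)|\ge 3$, and apply the two presentations of Lemma~\ref{duv} to write
\[
(1-1_S)=\sum_i c_i\,1_{L_i}(1-1_{S_i}),\qquad (1-1_T)=\sum_j d_j\,(1-1_{T_j})1_{R_j},
\]
with $\theta_{S_i}(x)=y_1$ and $\theta_{T_j}^{-1}(x)=y_2$ for all $i,j$. There are only finitely many pairs $(i,j)$, and the three points $\theta_{S_i}(x)=y_1$, $x$ and $\theta_{T_j}^{-1}(x)=y_2$ are pairwise distinct, so continuity produces a single clopen neighborhood $W_x\subseteq W$ of $x$ on which $\theta_{S_i}(W_x),\,W_x,\,\theta_{T_j}^{-1}(W_x)$ are mutually disjoint for every $i,j$. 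Lemma~\ref{tec} then yields $(1-1_{S_i})1_{W_x}(1-1_{T_j})\in\Span\{1-1_U\mid U\in[[G]]\}$, and because $1_L(1-1_U)1_R=(1-1_{LUR})-(1-1_{LR})$ shows that this span is stable under left- and right-multiplication by $1_L,1_R$ for $L,R\in[[G]]$, assembling the double sum gives $(1-1_S)1_{W_x}(1-1_T)\in B$.

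To finish, I would cover the compact set $W$ by finitely many such $W_x$, refine to a clopen partition $W=\bigsqcup_k W'_k$ with $W'_k\subseteq W_{x_k}$, and apply the previous step on each piece with its own $x_k$-dependent Lemma~\ref{duv} expansion, observing that the mutual disjointness on $W_{x_k}$ is inherited by $W'_k$. Summing yields $(1-1_S)1_W(1-1_T)\in B$. The main obstacle I anticipate is the bookkeeping in this final assembly: the Lemma~\ref{duv} decompositions of $(1-1_S)$ and $(1-1_T)$ depend on the chosen base point $x$, so each piece $W'_k$ of the partition needs its own expansion, and only after this matching do Lemmas~\ref{tec} and~\ref{duv} combine to land the whole expression in $B$.
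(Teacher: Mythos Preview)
Your proposal is correct and follows essentially the same strategy as the paper: establish $(1-1_S)1_W(1-1_T)\in B$ for clopen $W\subseteq G^{(0)}$ via Lemmas~\ref{duv} and~\ref{tec} at a base point, then pass to a local basis and sum. The only noticeable difference is in the reduction from a general compact open bisection $V\in\mathcal{C}$ to the case $V\subseteq G^{(0)}$: you invoke Lemma~\ref{cuida} to decompose $V$ into pieces lying inside elements of $[[G]]$ and absorb the extra unitary via $(1-1_S)1_{U_i}=(1-1_{SU_i})-(1-1_{U_i})$, whereas the paper instead, for each $g\in G$, manufactures explicit $U,V\in[[G]]$ with $1_U1_W1_V=1_{\theta_U(r(W))}\in C(G^{(0)})$ and uses the invariance $B1_U=B=1_VB$. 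Both are equivalent ways of saying that left/right multiplication by $\pi([[G]])$ preserves $B$ and that $[[G]]$ covers $G$; your route is arguably a bit cleaner because it cites Lemma~\ref{cuida} directly rather than redoing its content.
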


\begin{proof}
Let $B:=\overline{\Span}\{1-1_U\in \mathrm C^*(G) \mid U\in[[G]]\}$. We will first show that
\begin{equation}\label{conti}
    B C(G^{(0)}) B\subset B.
\end{equation}
It suffices to prove that, given $U,V\in[[G]]$, there is a basis $\mathcal{W}$ for $G^{(0)}$ consisting of compact open sets satisfying $(1-1_U)1_W(1-1_V)\in B$, for each $W\in\mathcal{W}$.
Take $x\in G^{(0)}$ and let $y$ and $z$ be distinct elements in $G(x)\setminus\{x\}$. 
By Lemma~\ref{duv}, there are $n\in \N$ and $L_1,\ldots,L_n$ ,$U_1,\ldots,U_n$, $V_1,\ldots,V_n$ and $R_1,\ldots,R_n$ in $[ [G]]$ and $\alpha_1,\ldots,\alpha_n$, $\beta_1,\ldots,\beta_n\in \C$ such that
\[
    1-1_U = \sum_{i=1}^n \alpha_i1_{L_i}(1-U_i), \qquad 1 -1_V = \sum_{i=1}^n \beta_i(1-1_{V_i})R_i
\]
with $\theta_{U_i}(x)=y$ and $\theta_{V_i}^{-1}(x)=z$ for every $i=1,\ldots,n$. 
By Lemma~\ref{tec}, we see that $(1-1_U)1_W(1-1_V)\in B$ for every sufficiently small compact open neighborhood $W$ of $x$. 
This proves \eqref{conti}.

Next we show that $B \mathrm C^*(G) B\subset B$. 
It suffices to prove that $B 1_W B\subset B$, for every $W$ in a basis for $G$ consisting of compact open sets. Given $g\in G$, take $U\in[[G]]$ such that $\theta_U(r(g))\neq s(g)$. 
Then, for $W\subset G^{(0)}$ sufficiently small compact open neighborhood of $g$, we have that $\theta_U(r(W))\cap s(W)=\emptyset$.
Let 
\[
    V:=UW\cup (UW)^{-1}\cup(G^{(0)}\setminus(\theta_U(r(W))\cup s(W)))\in[[G]].
\]
Since $\theta_U(r(W))\cap s(W)=\emptyset$, we have $UWV = \theta_U(r(W))\subset G^{(0)}$ and, finally,
\[
    B 1_W B = B (1_U 1_W 1_V) B = B 1_{\theta_U(r(W))} B \subset B
\]
by~\eqref{conti}.
\end{proof}

\begin{corollary}\label{cor:imp-reduced}
Let $G$ be an ample groupoid with compact unit space.
If $|G(x)|\geq 3$ for every $x\in G^{(0)}$, 
then $\overline{\Span}\{1-1_U\in \mathrm C_r^*(G) \mid U\in[[G]]\}$ is a hereditary $\mathrm C^*$-subalgebra of $\mathrm C_r^*(G)$.
If, in addition, $G$ is second countable, essentially principal and minimal, 
then $\overline{\Span}\{1-1_U\in \mathrm C_r^*(G) \mid U\in[[G]]\}$ is stably isomorphic to $\mathrm C_r^*(G)$.
\end{corollary}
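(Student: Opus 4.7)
My plan is to handle the two statements separately. For the first (hereditarity inside $\mathrm C^*_r(G)$), I would simply rerun the proof of Theorem~\ref{imp} verbatim with $\mathrm C^*_r(G)$ replacing $\mathrm C^*(G)$. Both Lemmas~\ref{tec} and~\ref{duv}, together with the two multiplicative identities used in the proof of Theorem~\ref{imp}, are purely algebraic manipulations taking place inside the common dense $*$-subalgebra $C_c(G)$, and continuity of multiplication together with the density of $C_c(G)$ in $\mathrm C^*_r(G)$ is all that is needed to pass to the appropriate norm closure. Writing $B_r := \overline{\Span}\{1-1_U\in\mathrm C_r^*(G)\mid U\in[[G]]\}$, the same two-step argument then yields $B_r\cdot C(G^{(0)})\cdot B_r\subset B_r$ and thereafter $B_r\cdot\mathrm C_r^*(G)\cdot B_r\subset B_r$.

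For the stable isomorphism, my strategy is to invoke Brown's stabilization theorem, which asserts that any full hereditary $C^*$-subalgebra of a $\sigma$-unital $C^*$-algebra is stably isomorphic to the ambient algebra. I would check the three required ingredients. First, $\sigma$-unitality: second countability of $G$ forces $\mathrm C^*_r(G)$ to be separable, hence $\sigma$-unital, and separability descends to $B_r$. Second, simplicity: essential principality together with minimality make $\mathrm C^*_r(G)$ simple (as recalled in the preliminaries), so any nonzero hereditary $C^*$-subalgebra is automatically full. Third, nontriviality of $B_r$: the hypothesis $|G(x)|\geq 3$ in particular gives $G\neq G^{(0)}$, so Lemma~\ref{cuida} produces some $U\in[[G]]$ with $U\neq G^{(0)}$, for which $1-1_U$ is a nonzero element of $B_r$.

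The only delicate point I anticipate is verifying that the $C_c(G)$-level computations in the proof of Theorem~\ref{imp} really do descend cleanly to the reduced completion; but since $C_c(G)$ sits as a common dense $*$-subalgebra of both $\mathrm C^*(G)$ and $\mathrm C^*_r(G)$ and the arguments only use norm-continuity of multiplication, this is immediate. With that in hand, the second assertion is a direct appeal to Brown's theorem applied to the full hereditary $C^*$-subalgebra $B_r$ of the separable simple $C^*$-algebra $\mathrm C^*_r(G)$.
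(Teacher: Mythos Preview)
Your proposal is correct and follows essentially the same route as the paper's proof, which simply reads: ``The first assertion follows directly from Theorem~\ref{imp} while the second follows from simplicity of $\mathrm C_r^*(G)$ and Brown's theorem.'' You have merely unpacked the details---noting that the $C_c(G)$-level computations transfer unchanged to the reduced completion, and verifying the hypotheses (fullness via simplicity, $\sigma$-unitality, nontriviality) needed to invoke Brown's theorem. One incidental simplification: since $G^{(0)}$ is compact, $\mathrm C_r^*(G)$ is already unital, so $\sigma$-unitality of the ambient algebra is immediate without appealing to separability.
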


\begin{proof}
    The first assertion follows directly from Theorem~\ref{imp} while the second follows from simplicity of $\mathrm C_r^*(G)$
    and Brown's theorem~\cite[Theorem 2.8]{MR0454645}.
\end{proof}

The next example shows that Theorem~\ref{imp} does not hold without the hypothesis on orbits.

\begin{example}
Let $X:=\mathbb{Z}\cup\{\pm\infty\}$ with the order topology and 
let $\varphi$ be the action of the infinite dihedral group $\mathbb{Z}\rtimes\mathbb{Z}_2$ on $X$ given by $\varphi_{(n,j)}(x):=n+{(-1)}^{j}x$, 
for $(n,j)\in\mathbb{Z}\rtimes\mathbb{Z}_2$ and $x\in X$.   
Then $|G_\varphi(x)|\geq 2$ for every $x\in G_\varphi^{(0)}$.

By arguing as in Example~\ref{cor}, one concludes that, given $U\in [[G_\varphi]]$, there is $(n,j)\in\mathbb{Z}\rtimes\mathbb{Z}_2$ such that $((n,j),\pm\infty)\in U$.

Let $E\colon \mathrm C^*(G_\varphi)\LRA C(G_\varphi^{(0)})$ be the canonical conditional expectation and 
let $\delta_{+\infty}$ and $\delta_{-\infty}$ be the two states on $C(G_\varphi^{(0)})$ given by point-evaluations at $+\infty$ and $-\infty$, respectively. 
Then $\delta_{+\infty}\circ E$ and $\delta_{-\infty}\circ E$ are two distinct states on $\mathrm C^*(G_\varphi)$
whose restrictions to $B=\overline{\Span}\{1-1_U\in \mathrm C^*(G_\varphi)\mid U\in[[G_\varphi]]\}$ agree.
Hence, $B$ is not a hereditary $\mathrm C^*$-subalgebra of $\mathrm C^*(G_\varphi)$.
\end{example}

By combining Theorem~\ref{imp} with the results of the previous section, we obtain the following:

\begin{theorem}\label{main}
Let $G$ be an ample groupoid with compact unit space.
Assume that $|G(x)|\geq 3$ for every $x\in G^{(0)}$. 
The following conditions are equivalent.
\begin{enumerate}
    \item[(i)] $\mathrm C^*(G)$ admits no tracial state;
    \item[(ii)] $\mathrm C^*_\pi([[G]])$ admits no character;
    \item[(iii)] $\pi$ does not weakly contain the trivial representation;
    \item[(iv)] $\mathrm C^*_\pi([[G]]) = \mathrm C^*(G)$.
\end{enumerate}
\end{theorem}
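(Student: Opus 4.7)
The plan is to exploit the equivalences (i) $\iff$ (ii) $\iff$ (iii), which follow essentially for free from the previously established material, and then to use Theorem \ref{imp} as the main bridge to (iv). Specifically, Proposition \ref{putnam} identifies (i) with the nonexistence of a $G$-invariant probability measure on $G^{(0)}$, and Corollary \ref{nice} --- whose hypotheses are satisfied since $|G(x)|\geq 3$ --- then converts this in turn to (ii) and (iii). Hence the substantive content of the theorem is the equivalence of these three conditions with (iv).

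For (iii) $\implies$ (iv), set $B := \overline{\Span}\{1 - 1_U \mid U \in [[G]]\} \subset \mathrm C^*_\pi([[G]])$. Proposition \ref{bonitinho} identifies (iii) with the statement $1 \in B$, and Theorem \ref{imp} says that $B$ is a hereditary $\mathrm C^*$-subalgebra of $\mathrm C^*(G)$. Any hereditary subalgebra containing the unit equals the full algebra, so $B = \mathrm C^*(G)$, and then $\mathrm C^*_\pi([[G]]) \supset B = \mathrm C^*(G)$ gives (iv).

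For (iv) $\implies$ (ii) I plan to argue by contradiction via a fixed-point argument. Assume $\mathrm C^*_\pi([[G]]) = \mathrm C^*(G)$ and that a character $\varphi$ on $\mathrm C^*_\pi([[G]])$ exists. Then $\varphi$ is a character on $\mathrm C^*(G)$, so its restriction to $C(G^{(0)})$ is point-evaluation at some $x \in G^{(0)}$. A short convolution calculation gives $1_U f 1_{U^{-1}} = f \circ \theta_U^{-1}$ for $U \in [[G]]$ and $f \in C(G^{(0)})$; applying $\varphi$ and using $|\varphi(1_U)| = 1$ forces $\theta_U^{-1}(x) = x$ for every $U \in [[G]]$, so $x$ is $[[G]]$-fixed. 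But the orbit hypothesis $|G(x)| \geq 2$ combined with Lemma \ref{cuida} guarantees that $[[G]]$ covers $G$, so some $U \in [[G]]$ moves $x$, a contradiction. This last direction is the main obstacle: Theorem \ref{imp} hands us (iii) $\implies$ (iv) nearly for free, but turning the algebraic equality $\mathrm C^*_\pi([[G]]) = \mathrm C^*(G)$ back into a dynamical statement about orbits is where the orbit assumption does its real work, through Lemma \ref{cuida}.
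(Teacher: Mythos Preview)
Your argument is correct, and for the equivalences (i)$\iff$(ii)$\iff$(iii) and the implication (iii)$\implies$(iv) it matches the paper exactly. The only difference is in closing the loop: the paper proves (iv)$\implies$(i) rather than (iv)$\implies$(ii). Assuming $\mathrm C^*(G)$ has a tracial state, it takes the associated $G$-invariant probability measure $\mu$ (which cannot be a Dirac mass since every orbit is nontrivial), forms the representation $\rho$ of $C_c(G)$ on $L^2(G^{(0)},\mu)$ as in~\eqref{rho}, and observes that the constant vector $1_{G^{(0)}}$ is fixed by $\rho\circ\pi$ but not by $\rho|_{C(G^{(0)})}$; hence $C(G^{(0)})\not\subset \mathrm C^*_\pi([[G]])$, contradicting (iv). Your route is more elementary---no Hilbert-space machinery, just the multiplicativity of a character and the conjugation identity $1_U f 1_{U^{-1}}=f\circ\theta_U^{-1}$---and in fact it is essentially the argument the paper itself adopts for the reduced-$\mathrm C^*$-algebra analogue in Theorem~\ref{surp}. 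The paper's $L^2$ argument, on the other hand, makes the obstruction more concrete: it exhibits an explicit representation in which $\pi([[G]])$ has an invariant vector that $C(G^{(0)})$ does not fix.
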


\begin{proof}
The equivalences (i)$\iff$(ii)$\iff$(iii) follow from Proposition~\ref{putnam} and Corollary~\ref{nice}.

(iii)$\implies$(iv):
By Proposition~\ref{bonitinho} and Theorem~\ref{imp}, 
$B:=\overline{\Span}\{1-1_U \mid U\in[[G]]\}$ is a hereditary $\mathrm C^*$-subalgebra of $\mathrm C^*(G)$ and $1_{\mathrm C^*(G)}\in B$. Hence, $B = \mathrm C^*(G)$.
Since $B\subset \mathrm C^*_\pi([[G]])$ the result follows.

(iv)$\implies$(i):
If $\mathrm C^*(G)$ has a tracial state, then $G^{(0)}$ admits an invariant probability measure $\mu$, cf.~Proposition~\ref{putnam}. 
Since $|G(x)|>1$ for each $x\in G^{(0)}$, $\mu$ cannot be a point-evaluation.
Let $\rho$ be the representation of $C_c(G)$ in $B(L^2(G^{(0)},\mu))$ as in~\eqref{rho}. 
Then $\rho$ extends to a representation of $\mathrm C^*(G)$ and of $\mathrm C^*_\pi([ [G]])$.
Note that the vector $1_{G^{(0)}}\in L^2(G^{(0)},\mu)$ is invariant under $\rho(\pi([ [G]]))$ and thus under $\rho|_{\mathrm C^*_\pi([ [G]])}$.
Now, if $\mathrm C^*_\pi([ [G]]) = \mathrm C^*(G)$, then $C(G^{(0)}) \subset \mathrm C^*_\pi([ [G]])$ 
but $\mathbb{C}1_{G^{(0)}}$ is not invariant under $\rho|_{C(G^{(0)})}$.
Indeed, if $X\subset G^{(0)}$ is any proper, non-empty subset which is compact and open, then $\rho(1_X)(1_{G^{(0)}}) = 1_X$.
Therefore $\mathrm C^*_\pi([ [G]])\neq \mathrm C^*(G)$.
\end{proof}

\begin{remark}\label{ho}
In~\cite[Proposition 5.3]{MR3630641}, U.~Haagerup and K.~Olesen considered a certain representation $\sigma$ of Thompson's group $V$ in the Cuntz algebra $\mathcal{O}_2$ 
and showed that $\mathrm C^*_\sigma(V)=\mathcal{O}_2$.
Under the identifications of $V$ with $[[G_{[2]}]]$ (see Example~\ref{full}) and $\mathcal{O}_2$ with $\mathrm C^*(G_{[2]})$, 
one can check that $\sigma$ and $\pi$ coincide. 
Hence, Theorem~\ref{main} recovers part of U.~Haagerup and K.~Olesen's result.
\end{remark}

We now state and prove a version of Theorem~\ref{main} regarding $\mathrm C^*_r(G)$. 

\begin{theorem}\label{surp}
Let $G$ be an ample groupoid with compact unit space.
Assume that $|G(x)|\geq 3$ for each $x\in G^{(0)}$ and consider the following conditions:
\begin{enumerate}
    \item[(i)] $\mathrm C^*_{r}(G)$ admits no tracial state;
    \item[(ii)] $\mathrm C^*_{\pi_r}([[G]])$ admits no character;
    \item[(iii)] $\pi_r$ does not weakly contain the trivial representation;
    \item[(iv)] $\mathrm C^*_{\pi_r}([[G]]) = \mathrm C^*_r(G)$.
\end{enumerate}
Then \textup{(i) $\implies$ (ii) $\iff$ (iii) $\iff$ (iv)}.
\end{theorem}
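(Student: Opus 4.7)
The plan is to adapt the proof of Theorem~\ref{main} to the reduced setting. The main new ingredient is a direct character argument that replaces the use of the representation $\rho$ on $L^2(G^{(0)},\mu)$ from that proof, since $\rho$ need not factor through $\mathrm C_r^*(G)$. I will establish the cycle (ii) $\implies$ (iii) $\implies$ (iv) $\implies$ (ii) together with (i) $\implies$ (iii); these together imply the claim.

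For (i) $\implies$ (iii), I combine Proposition~\ref{putnam} and Corollary~\ref{nice}: the nonexistence of a tracial state on $\mathrm C_r^*(G)$ is equivalent to the nonexistence of a $G$-invariant probability measure on $G^{(0)}$, which in turn is equivalent to $\pi$ not weakly containing the trivial representation. Since the canonical quotient $q\colon \mathrm C^*(G)\LRA \mathrm C_r^*(G)$ is contractive and $\pi_r = q\circ\pi$, weak containment of the trivial representation by $\pi_r$ would force the same for $\pi$; the contrapositive then gives the desired implication. The implication (ii) $\implies$ (iii) is the contrapositive of the note following Proposition~\ref{bonitinho}. For (iii) $\implies$ (iv), I imitate the corresponding step in Theorem~\ref{main}: Proposition~\ref{bonitinho} yields $1\in\overline{\Span}\{1-1_U \mid U\in[[G]]\}\subset\mathrm C_r^*(G)$, and Corollary~\ref{cor:imp-reduced} asserts this span is a hereditary $\mathrm C^*$-subalgebra of $\mathrm C_r^*(G)$. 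A hereditary $\mathrm C^*$-subalgebra containing the unit must equal the whole algebra, so $\mathrm C^*_{\pi_r}([[G]])=\mathrm C_r^*(G)$.

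The main obstacle is (iv) $\implies$ (ii), where the full-case argument via $\rho$ cannot be transferred directly. Suppose for contradiction that $\mathrm C^*_{\pi_r}([[G]])=\mathrm C_r^*(G)$ admits a character $\varphi$. Its restriction to $C(G^{(0)})$ is a character of this commutative algebra, hence point evaluation at some $x\in G^{(0)}$. By Lemma~\ref{cuida}, $[[G]]$ covers $G$, so there exists $U\in[[G]]$ with $\theta_U(x)=y\neq x$. Choose a clopen neighborhood $W\ni x$ small enough that $\theta_U(W)\cap W=\emptyset$; a direct convolution computation then gives $1_W 1_U 1_W = 0$ in $\mathrm C_r^*(G)$. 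Applying $\varphi$ yields $\varphi(1_U) = \varphi(1_W)^2\varphi(1_U) = 0$, contradicting $|\varphi(1_U)|=1$ (which holds since $1_U$ is unitary in $\mathrm C_r^*(G)$). Hence $\mathrm C^*_{\pi_r}([[G]])$ admits no character, completing the cycle.
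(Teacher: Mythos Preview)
Your proof is correct and follows essentially the same approach as the paper. The only minor variation is in (iv) $\implies$ (ii): the paper observes that a character $\tau$ is in particular a tracial state, so $\tau(1_{r(S)})=\tau(1_{s(S)})$ for every compact open bisection $S$, which forces $\theta_S(x)=x$ whenever $x\in s(S)$ and contradicts $|G(x)|>1$; you instead exploit multiplicativity directly via $1_W 1_U 1_W=0$, reaching the same contradiction.
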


\begin{proof}
The implications (i)$\implies$(ii)$\implies$(iii)$\implies$(iv) are done as in the full case.

(iv)$\implies$(ii). 
If $\mathrm C^*_{\pi_r}(G) = \mathrm C^*_r(G)$ admits a character $\tau$, then $\tau|_{C(G^{(0)})}$ is a point evaluation at some $x\in G^{(0)}$.
As $\tau$ is a tracial state, it follows that for each compact and open bisection $S$ with $x\in s(S)$, we have $\theta_S(x)=x$. 
This contradicts the hypothesis that $|G(x)|>1$.
\end{proof}

The next example shows that the implication from (ii) to (i) in the above theorem fails in general, 
even in the case when $G$ is a principal, minimal and ample groupoid with unit space homeomorphic to the Cantor set.

\begin{example}
Let $\Gamma$ be a non-amenable, countable and residually finite group. 
There is a descending sequence ${(\Gamma_n)}_n$ of finite-index normal subgroups of $\Gamma$ 
such that the canonical map $j\colon\Gamma\LRA\prod\frac{\Gamma}{\Gamma_n}$ is injective. 
Then $X:=\overline{j(\Gamma)}$ is a topological group homeomorphic to the Cantor set. 
Furthermore, the action $\varphi$ by multiplication of $\Gamma$ on $X$ is free, minimal 
and the Haar measure on $X$ is $\Gamma$-invariant (actions of this sort were studied in detail in \cite{MR2427048}).

Then $\mathrm C^*_r(G_\varphi)$ admits a tracial state, whereas $\mathrm C^*_{\pi_r}([[G_\varphi]])$ does not admit a character, 
since $\mathrm C^*_r(\Gamma)$ embeds unitally in it and $\Gamma$ is non-amenable.
\end{example}

%%%%%%%%%%%%%%%%%%%%%%%%%%%%%%%%%%%%%%%%%%%%%%%%%%%%%%%%%%%%%%%%%%%%%%%%%%%%%%%%%%%%%%%%%
%%%%%%%%%%%%%%%%%%%%%%%%%%%%%%%%%%%%%%%%%%%%%%%%%%%%%%%%%%%%%%%%%%%%%%%%%%%%%%%%%%%%%%%%%
\section{$\mathrm C^*$-simplicity of topological full groups}\label{simp}

As an application of the above results, we provide conditions which ensure that the topological full group of an ample groupoid is $\mathrm C^*$-simple.

Recall that an ample groupoid $G$ is \emph{amenable} if there exists a net $(\mu_i)_i$ in $C_c(G)$ of non-negative functions such that
\begin{equation}\label{an}
    \sum_{h\in s^{-1}(r(g))}\mu_i(h)\LRA 1 \quad 
    \text{and} \quad
    \sum_{h\in s^{-1}(r(g))}|\mu_i(h)-\mu_i(hg)|\LRA 0,
\end{equation}
for $g\in G$, uniformly on compact subsets of $G$.
Amenability of $G$ is equivalent to nuclearity of $\mathrm C^*_r(G)$, and it implies that $\mathrm C^*(G)$ and $\mathrm C^*_r(G)$ are canonically isomorphic. 
For a proof of these facts, see, e.g., \cite{zbMATH05256855} and \cite{MR2536186}.
R.~Willett constructed in \cite{Willett} an example of non-amenable groupoid $G$ such that $\mathrm C^*(G)$ is canonically isomorphic to $\mathrm C^*_r(G)$.

\begin{lemma}\label{amen}
Let $G$ be an ample groupoid with compact unit space. 
If $\Gamma\leq [[G]]$ is an amenable subgroup which covers $G$, then $G$ is amenable.
\end{lemma}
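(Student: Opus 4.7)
The plan is to transport a Følner net for the amenable group $\Gamma$ to an approximate invariant mean for the groupoid $G$, by averaging indicator functions of the bisections in $\Gamma$. Concretely, I would fix a (right) Følner net $(F_i)$ of finite subsets of $\Gamma$, so that $|F_i \triangle F_i\gamma|/|F_i|\to 0$ for each $\gamma\in\Gamma$, and set
\[
    \mu_i := \frac{1}{|F_i|} \sum_{U\in F_i} 1_U \in C_c(G).
\]
Each $\mu_i$ is non-negative and compactly supported because each $U\in\Gamma$ is a compact open bisection.

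The first condition in \eqref{an} should come for free: for every $x\in G^{(0)}$ and every $U\in[[G]]$ the requirement $s(U)=G^{(0)}$ forces $|U\cap s^{-1}(x)|=1$, so in fact $\sum_{h\in s^{-1}(x)}\mu_i(h)=1$ identically. The heart of the argument will be the pointwise identity
\[
    1_U(hg)=1_{UV^{-1}}(h), \qquad h\in s^{-1}(r(g)),
\]
valid for any $V\in\Gamma$ containing $g$ and any $U\in\Gamma$; both sides equal $1$ precisely when $h$ is the element $U(s(g))\cdot V(s(g))^{-1}$ of $s^{-1}(r(g))$. Using that right-multiplication by $V^{-1}$ is a bijection of $\Gamma$ (so that $U\mapsto UV^{-1}$ sends $F_i$ bijectively onto $F_iV^{-1}$) and that $\sum_{h\in s^{-1}(r(g))}1_W(h)=1$ for each $W\in[[G]]$, this identity upgrades after a standard triangle-inequality manipulation to
\[
    \sum_{h\in s^{-1}(r(g))} |\mu_i(h)-\mu_i(hg)| \;\leq\; \frac{|F_i \triangle F_iV^{-1}|}{|F_i|}.
\]

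To extract uniformity on a compact set $K\subset G$, I would invoke the covering hypothesis to choose finitely many $V_1,\ldots,V_n\in\Gamma$ with $K\subset V_1\cup\cdots\cup V_n$, so that every $g\in K$ lies in some $V_j$; passing far enough along the Følner net to make $|F_i \triangle F_iV_j^{-1}|/|F_i|$ small for $j=1,\ldots,n$ then gives the second amenability condition uniformly on $K$, as required. The only step that is not bookkeeping is the identity $1_U(hg)=1_{UV^{-1}}(h)$, and this is where the defining property $s(U)=r(U)=G^{(0)}$ of elements of $[[G]]$ does real work by guaranteeing that the relevant elements at $s(g)$ and $r(g)$ exist and are unique; the covering assumption is exactly what lets one produce the bisection $V$ in the first place. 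I do not anticipate any serious obstacle beyond this verification.
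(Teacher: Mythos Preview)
Your proposal is correct and follows essentially the same route as the paper: define $\mu=\frac{1}{|F|}\sum_{U\in F}1_U$ for a Følner set $F\subset\Gamma$, use $s(U)=G^{(0)}$ to get $\sum_{h\in s^{-1}(x)}\mu(h)=1$, and exploit the identity $1_U(hg)=1_{UV^{-1}}(h)$ for $g\in V\in\Gamma$ to bound the variation by a Følner ratio, with the covering hypothesis supplying finitely many $V_j$ for uniformity on a compact $K$. The paper carries out the triangle-inequality step by an explicit splitting over $F\cap FV_i^{-1}$ and its complement rather than invoking the identity directly, but the resulting estimate $\big(|F\setminus FV_i^{-1}|+|F\setminus FV_i|\big)/|F|=|F\triangle FV_i^{-1}|/|F|$ is exactly your bound.
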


\begin{proof}
We are going to construct functions satisfying~\eqref{an}. 
Let $K\subset G$ be a compact subset and let $\epsilon>0$.
As $\Gamma$ covers $G$ and is amenable, there are $V_1,\dots,V_n\in\Gamma$ such that $K\subset\bigcup_{i=1}^n V_i$ and a finite subset $F\subset \Gamma$ such that
\[
    \frac{|F\triangle F V_i|}{|F|} < \epsilon,
\]
for $1\leq i \leq n$.
Let $\mu: = \frac{1}{|F|}\sum_{U\in F} 1_U$. 
For $x\in G^{(0)}$ we have $\sum_{h\in s^{-1}(x)}\mu(h)=1$. 
Given $g\in K$, take $V_i$ such that $g\in V_i$. 
Then, for $h\in s^{-1}(r(g))$,
\begin{align*}
    |F|| \mu(h) - \mu(hg)| 
    &= \left|\sum_{U\in F}1_U(h)-1_{UV_i^{-1}}(h)\right|\\
    &\leq\left|\sum_{U\in F\setminus(FV_i^{-1})}1_U(h)-\sum_{U\in F\setminus(FV_i)}1_{UV_i^{-1}}(h)\right| \\
    &+\left|\sum_{U\in F\cap FV_i^{-1}}1_U(h)-\sum_{U\in F\cap FV_i}1_{UV_i^{-1}}(h)\right|\\
    &\leq\sum_{U\in F\setminus(FV_i^{-1})}1_U(h)+\sum_{U\in F\setminus(FV_i)}1_{UV_i^{-1}}(h).
\end{align*}

Consequently,
\begin{align*}
    \sum_{h\in s^{-1}(r(g))}|\mu(h)-\mu(hg)|
    &\leq\frac{1}{|F|}\sum_h\left(\sum_{U\in F\setminus(FV_i^{-1})}1_U(h)+\sum_{U\in F\setminus(FV_i)}1_{UV_i^{-1}}(h)\right)\\
    &=\frac{|F\setminus (FV_i^{-1})|+|F\setminus (FV_i)|}{|F|}
    <\epsilon.
\end{align*}
Therefore, $G$ is amenable.
\end{proof}

The converse implication is not true in general, see, e.g., Remark~\ref{rem:Elek-Monod}.

Suppose a group $\Gamma$ is acting on a set $X$ and let $U\subset X$ be a subset.
The \emph{rigid stabilizer} of $U$ with respect to the action is the subgroup $\Gamma_U \leq \Gamma$ of the elements which pointwise fix the complement $X\setminus U$.
Let $W \subset G^{(0)}$ be non-empty and clopen and let $G_W = r^{-1}(W) \cap s^{-1}(W)$ be the restricted groupoid.
If ${[[G]]}_W$ is the rigid stabilizer of $W$ with respect to the action $\theta\colon [[G]]\curvearrowright G^{(0)}$,
then there is a surjective homomorphism ${[[G]]}_W \LRA [[G_W]]$ given by restriction.
If $G$ is essentially principal this map is an isomorphism.

\begin{theorem}\label{thm:nonamenable-C-simple}
Let $G$ be a second countable, essentially principal, minimal and ample groupoid with compact unit space. 
If 
\begin{enumerate}
\item[(i)]$G$ is not amenable, or
\item[(ii)]$\pi_r$ does not weakly contain the trivial representation,
\end{enumerate}
 then $[[G]]$ is $\mathrm C^*$-simple.
\end{theorem}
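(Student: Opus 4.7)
The plan is to invoke the Le Boudec--Matte Bon criterion recalled in the introduction: a countable group of homeomorphisms of a Hausdorff space is $\mathrm C^*$-simple whenever the rigid stabilizers of all non-empty open subsets are non-amenable. Second countability of $G$ ensures that $[[G]]$ is countable, and $[[G]]$ acts on $G^{(0)}$ via $\theta$. Since $G^{(0)}$ has a basis of clopen sets and rigid stabilizers behave monotonely in the open set, it suffices to show that ${[[G]]}_W$ is non-amenable for every non-empty clopen $W\subset G^{(0)}$; essential principality then identifies ${[[G]]}_W$ with $[[G_W]]$, so the task reduces to proving non-amenability of $[[G_W]]$.

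Before splitting into cases, I would rule out the possibility that $G^{(0)}$ has an isolated point: a standard minimality plus local-homeomorphism argument forces every singleton to then be open, so $G^{(0)}$ would be finite discrete, $G$ would be the pair groupoid on $n$ points, hence amenable and with an obvious $\pi_r$-invariant vector --- contradicting~(i) and~(ii) respectively. Hence $G^{(0)}$ has no isolated points, every non-empty clopen $W$ is infinite, minimality makes each $G_W$-orbit $G(x)\cap W$ dense in $W$ (so of size at least two), and Lemma~\ref{cuida} yields that $[[G_W]]$ covers $G_W$.

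Now suppose towards a contradiction that $[[G_W]]$ is amenable. Lemma~\ref{amen} then gives that $G_W$ is amenable. Because $G$ is minimal, the projection $1_W$ is full in $\mathrm C^*_r(G)$, so $\mathrm C^*_r(G_W)=1_W \mathrm C^*_r(G) 1_W$ is Morita equivalent to $\mathrm C^*_r(G)$; since amenability of an étale groupoid with compact unit space is equivalent to nuclearity of the reduced $\mathrm C^*$-algebra and nuclearity is Morita invariant, $G$ itself is amenable --- already contradicting hypothesis~(i). Under hypothesis~(ii), the amenability of $G$ further gives $\mathrm C^*(G)=\mathrm C^*_r(G)$ canonically and $\pi=\pi_r$. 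The amenable group $[[G_W]]$ acting on the compact space $W$ admits a $[[G_W]]$-invariant probability measure $\mu_W$, which is $G_W$-invariant by Proposition~\ref{surpre} applied to $G_W$ and the covering subgroup $[[G_W]]$. A standard induction using minimality and ampleness --- cover $G^{(0)}$ by the ranges of finitely many pairwise disjoint compact open bisections $V_1,\ldots,V_n$ with sources in $W$ and set $\mu(A):=c^{-1}\sum_i \mu_W(\theta_{V_i^{-1}}(A\cap r(V_i)))$ for the appropriate normalising constant $c$ --- extends $\mu_W$ to a $G$-invariant probability measure on $G^{(0)}$, and then Proposition~\ref{surpre}(i)$\Rightarrow$(ii) gives that $\pi=\pi_r$ weakly contains the trivial representation, contradicting~(ii).

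The principal obstacles are the Morita step transferring amenability from $G_W$ back to $G$ and the explicit induction of the invariant measure from $W$ up to $G^{(0)}$; both are standard but rely crucially on minimality of $G$. Once these are in place, the rest of the argument is a direct assembly of Lemmas~\ref{cuida} and~\ref{amen}, Proposition~\ref{surpre}, and the Le Boudec--Matte Bon criterion.
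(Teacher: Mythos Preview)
Your argument is correct and follows the same contrapositive skeleton as the paper's proof: assume $[[G]]$ is not $\mathrm C^*$-simple, invoke Le~Boudec--Matte~Bon to obtain a non-empty clopen $W$ with ${[[G]]}_W\cong[[G_W]]$ amenable, apply Lemma~\ref{amen} to get $G_W$ amenable, and then pass nuclearity up to $\mathrm C^*_r(G)$ via $1_W\mathrm C^*_r(G)1_W\cong\mathrm C^*_r(G_W)$ and Morita invariance to contradict~(i). You are in fact a bit more careful than the paper here, explicitly disposing of the finite--unit-space case and verifying via Lemma~\ref{cuida} that $[[G_W]]$ covers $G_W$ before invoking Lemma~\ref{amen}.

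The genuine divergence is in how you reach the contradiction with hypothesis~(ii). The paper argues indirectly: the $[[G_W]]$-invariant measure gives a tracial state on $\mathrm C^*_r(G_W)$, simplicity forces it faithful, hence $\mathrm C^*_r(G_W)$ and the Morita-equivalent $\mathrm C^*_r(G)$ are stably finite, and then the dichotomy of Rainone--Schafhauser (or B\"onicke--Li) supplies a tracial state on $\mathrm C^*_r(G)$. Your route is more elementary and self-contained: you induce the $G_W$-invariant measure $\mu_W$ on $W$ to a $G$-invariant measure on $G^{(0)}$ by hand, using a finite clopen partition $G^{(0)}=\bigsqcup_i r(V_i)$ with $s(V_i)\subset W$ (available by minimality and compactness), and checking $G$-invariance of the transported measure directly from $G_W$-invariance of $\mu_W$ via the bisections $V_i^{-1}SV_j\subset G_W$. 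This bypasses the stably-finite/trace dichotomy entirely; the cost is only the short measure-induction computation, which is indeed standard.
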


\begin{proof}
Assume $[[G]]$ is not $\mathrm C^*$-simple.
By~\cite[Theorem 3.7]{boudec2016subgroup}, there exists a non-empty and clopen $W\subset G^{(0)}$ such that the rigid stabilizer ${[[G]]}_W\cong[[G_W]]$ is amenable. 
Clearly, $G_W$ is an essentially principal, minimal and ample groupoid with compact unit space. 
By Lemma~\ref{amen}, $G_W$ is thus amenable and $\mathrm C^*_r(G_W)$ is nuclear. 

Since $\mathrm C^*_r(G)$ is simple, the projection $1_W\in \mathrm C^*_r(G)$ is full.
It therefore follows from~\cite[Lemma 5.2]{MR2876963} and Brown's theorem (\cite[Theorem 2.8]{MR0454645}) 
that the full corner $\mathrm C^*_r(G_W) = 1_W \mathrm C_r^*(G) 1_W $ is stably isomorphic to $\mathrm C^*_r(G)$. 
Consequently, $\mathrm C^*_r(G)$ is nuclear, and $G$ is amenable. 

Furthermore, amenability of $[[G_W]]$ implies that $W$ admits a $[[G_W]]$-invariant probability measure. 
Corollary~\ref{nice} and Proposition~\ref{putnam} then imply that $\mathrm C^*_r(G_W)$ admits a tracial state. 
As $\mathrm C^*_r(G_W)$ is simple, the tracial state is faithful. 
Hence, $\mathrm C^*_r(G_W)$ is stably finite and, consequently, so is $\mathrm C^*_r(G)$.

Now,~\cite[Theorem 6.5]{rainone2017dichotomy} (or~\cite[Theorem 5.14]{bonicke2017ideal}) implies that $\mathrm C^*_r(G)$ admits a tracial state. 
Since $\mathrm C^*_r(G) = \mathrm C^*(G)$, we conclude from Corollary~\ref{nice} again that $\pi=\pi_r$ weakly contains the trivial representation.
\end{proof}

The next corollary is an immediate consequence of Theorems~\ref{surp} and~\ref{thm:nonamenable-C-simple}.

\begin{corollary}
    Let $G$ be a second countable, essentially principal, minimal and ample groupoid with compact unit space. 
    If $\mathrm C^*_{r}(G)$ admits no tracial state, then $[[G]]$ is $\mathrm C^*$-simple.
\end{corollary}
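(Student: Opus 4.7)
The plan is to chain Theorems~\ref{surp} and~\ref{thm:nonamenable-C-simple} together, after verifying the orbit hypothesis $|G(x)|\geq 3$ needed to invoke the first of these. So I assume $\mathrm C^*_r(G)$ admits no tracial state and set out to apply Theorem~\ref{thm:nonamenable-C-simple}(ii).

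First I would check that under the current hypotheses, every orbit is infinite (in fact it has at least three points). Suppose, for contradiction, that $|G(x_0)|\leq 2$ for some $x_0\in G^{(0)}$. Since $G$ is minimal and $G^{(0)}$ is Hausdorff, the finite orbit $G(x_0)$ is both closed and dense, forcing $G^{(0)} = G(x_0)$ to be a finite set. The normalised counting measure $\mu$ on this finite set is then $G$-invariant: for any $S\in\mathcal{S}$, injectivity of $r|_S$ and $s|_S$ yields $|r(S)|=|s(S)|$, hence $\mu(r(S))=\mu(s(S))$. By Proposition~\ref{putnam}, this would give a tracial state on $\mathrm C^*_r(G)$, contradicting our assumption. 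Therefore $|G(x)|\geq 3$ for every $x\in G^{(0)}$.

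With the orbit hypothesis in hand, Theorem~\ref{surp} applies; in particular the implication (i)$\implies$(iii) gives that $\pi_r$ does not weakly contain the trivial representation. This is precisely condition (ii) of Theorem~\ref{thm:nonamenable-C-simple}, whose conclusion is that $[[G]]$ is $\mathrm C^*$-simple.

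There is essentially no obstacle here; the only mild subtlety is ruling out small finite orbits, but the minimality plus Hausdorffness of $G^{(0)}$ reduces this immediately to the remark that a finite groupoid with finite unit space always has a $G$-invariant probability measure (the uniform one).
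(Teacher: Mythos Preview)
Your proof is correct and follows exactly the route the paper indicates, namely chaining Theorem~\ref{surp}(i)$\Rightarrow$(iii) with Theorem~\ref{thm:nonamenable-C-simple}(ii). The only addition is your explicit verification of the orbit hypothesis $|G(x)|\geq 3$ needed for Theorem~\ref{surp}; the paper leaves this implicit, but your argument (a finite orbit in a minimal groupoid forces $G^{(0)}$ finite, whence the counting measure is $G$-invariant and yields a trace via Proposition~\ref{putnam}) is the natural way to fill that gap.
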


Recall that an action of a group $\Gamma$ on a topological space $X$ is \emph{topologically free}
if $\Int\{x\in X \mid gx = x\}= \emptyset$, for each $g\in \Gamma\setminus\{e\}$. 
The following result generalizes~\cite[Theorem 4.38]{boudec2016subgroup}, which assumed freeness of the action. 

\begin{corollary}\label{cor:action-C-simple}
    Let $\varphi$ be a topologically free and minimal action of a countable and non-amenable group $\Gamma$ on the Cantor set. 
    Then $[[G_\varphi]]$ is $\mathrm C^*$-simple.
\end{corollary}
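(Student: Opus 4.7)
The plan is to verify the hypotheses of Theorem~\ref{thm:nonamenable-C-simple} and apply its condition~(ii). First, I check that $G_\varphi$ is a second countable, essentially principal, minimal, and ample groupoid with compact unit space. The Cantor set $X$ is compact, metrizable, and totally disconnected, and $\Gamma$ is countable, so $G_\varphi=\Gamma\times X$ is ample and second countable with compact unit space $X$. Minimality of $G_\varphi$ follows from minimality of $\varphi$, and topological freeness of $\varphi$ yields essential principality of $G_\varphi$ since
\[
    \Int\{(g,x)\in G_\varphi \mid r(g,x)=s(g,x)\} \;=\; G_\varphi^{(0)} \cup \bigcup_{g\neq e}\{g\}\times\Int\{x \mid gx=x\} \;=\; G_\varphi^{(0)}.
\]

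Next, I will show that $\pi_r$ does not weakly contain the trivial representation; then Theorem~\ref{thm:nonamenable-C-simple}(ii) finishes the proof. Suppose for contradiction that $\pi_r$ does weakly contain the trivial representation. Restricting along the canonical embedding $\Gamma\hookrightarrow[[G_\varphi]]$ of Example~\ref{tg} produces a representation $\pi_r|_\Gamma$ of $\Gamma$ that also weakly contains the trivial representation. Under the standard identification $\mathrm C^*_r(G_\varphi)\cong C(X)\rtimes_r\Gamma$, the unitary $\pi_r(g)=1_{\{g\}\times X}$ is precisely the canonical implementing unitary $u_g$ of the reduced crossed product. In the standard regular representation of $C(X)\rtimes_r\Gamma$ on $L^2(X,\mu)\otimes\ell^2(\Gamma)$ (for any faithful probability measure $\mu$ on $X$), each $u_g$ acts as $1\otimes\lambda_g$, where $\lambda$ denotes the left regular representation of $\Gamma$. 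Consequently
\[
    \left\|\sum_g \alpha_g\,\pi_r(g)\right\|_{\mathrm C^*_r(G_\varphi)} \;=\; \left\|\sum_g \alpha_g\,\lambda_g\right\|_{\mathrm C^*_r(\Gamma)}
\]
for every finitely supported family of scalars $(\alpha_g)_{g\in\Gamma}$, so the left regular representation of $\Gamma$ weakly contains the trivial representation, forcing $\Gamma$ to be amenable. This contradicts the hypothesis, so condition~(ii) of Theorem~\ref{thm:nonamenable-C-simple} holds and $[[G_\varphi]]$ is $\mathrm C^*$-simple.

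The substantive step is the norm identification $\|\sum\alpha_g\pi_r(g)\|=\|\sum\alpha_g\lambda_g\|$, which is a consequence of the explicit form of the reduced crossed product. The main conceptual point is the recognition that one must verify~(ii) rather than~(i): non-amenability of the acting group $\Gamma$ does not a priori preclude amenability of the action, so the groupoid $G_\varphi$ could in principle satisfy~(i) or fail it, but in either case the argument above yields~(ii) directly and uniformly.
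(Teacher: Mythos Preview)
Your proof is correct and follows essentially the same approach as the paper. Both arguments verify condition~(ii) of Theorem~\ref{thm:nonamenable-C-simple} by exploiting the unital embedding $\mathrm C^*_r(\Gamma)\hookrightarrow \mathrm C^*_{\pi_r}([[G_\varphi]])$ arising from $g\mapsto 1_{\{g\}\times X}$; the paper states this embedding in one line, while you spell out the underlying norm identity $\|\sum_g\alpha_g\pi_r(g)\|=\|\sum_g\alpha_g\lambda_g\|$ and the verification of the groupoid hypotheses explicitly.
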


\begin{proof}
Since $\mathrm C^*_r(\Gamma)$ embeds unitally in $\mathrm C^*_{\pi_r}([[G_\varphi]])$, 
non-amenability of $\Gamma$ implies that $\pi_r$ does not weakly contain the trivial representation.
\end{proof}

\begin{remark}\label{rem:Elek-Monod}
In~\cite{MR3080176}, G.~Elek and N.~Monod constructed a free and minimal action $\varphi$ of $\mathbb{Z}^2$ on the Cantor set such that $[[G_\varphi]]$ is not amenable. 
This example is not covered by Theorem~\ref{thm:nonamenable-C-simple}, and we do not know whether $[[G_\varphi]]$ is $\mathrm C^*$-simple.
\end{remark}

\bibliographystyle{acm}
\bibliography{bibliografia}
\end{document}